\DeclareMathOperator{\as}{as}
\DeclareMathOperator{\andd}{and}
\newcommand{\bR}{\mathbb R}
\newcommand{\bs}{\mathbf s}
\newcommand{\bw}{\mathbf w}
\newcommand{\bfP}\bfT
\newcommand{\bfT}{\mathbf T}
\newcommand{\Tr}{\mathrm{Tr}}
\newcommand{\cL}{\mathcal L}
\newcommand{\nti}{{n\to\infty}}
\newcommand{\vphi}{\varphi}
\newcommand{\veps}{\varepsilon}
\numberwithin{equation}{section}
\newtheorem{thm}{Theorem}[section]
\newtheorem{defn}[thm]{Definition}
\newtheorem{lem}[thm]{Lemma}
\newtheorem{coro}[thm]{Corollary}
\newtheorem{prop}[thm]{Proposition}
\newtheorem{nota}[thm]{Notation}
\theoremstyle{definition}
\newtheorem{exam}[thm]{Example}
\title{On the L\'evy constants of Sturmian continued fractions}
\author{Yann Bugeaud}
\address{Universit\'e de Strasbourg, IRMA, CNRS, UMR 7501, 7 rue Ren\'e Descartes, 67084, Strasbourg, France}
\email{yann.bugeaud@math.unistra.fr}
\author{Dong Han Kim}
\address{Department of Mathematics Education, Dongguk University--Seoul, 30 Pildong-ro 1-gil, Jung-gu, Seoul, 04620 Korea}
\email{kim2010@dongguk.edu}
\author{Seul Bee Lee}
\address{Centro di Ricerca Matematica Ennio de Giorgi, Scuola Normale Superiore, Piazza dei Cavalieri 3, 56126 Pisa, Italy}
\email{seulbee.lee@sns.it}
\keywords{Continued fraction, L\'evy constant, Sturmian word, mechanical word, quasi-Sturmian word} 
\subjclass[2010]{11A55, 68R15}  
\begin{document}

\begin{abstract}
The L\'evy constant of an irrational real number is defined by the exponential growth rate 
of the sequence of denominators of the principal convergents in its continued fraction expansion.
Any quadratic irrational has an ultimately periodic continued fraction expansion and it is well-known that 
this implies the existence of a L\'evy constant. 
Let $a, b$ be distinct positive integers. 
If the sequence of partial quotients of an irrational real number 
is a Sturmian sequence over $\{a, b\}$, 
then it has a L\'evy constant, which depends only on $a$, $b$, and the slope of the Sturmian sequence, 
but not on its intercept. 
We show that the set of L\'evy constants of irrational real numbers whose 
sequence of partial quotients is periodic or Sturmian is equal to the whole interval $[\log ((1+\sqrt 5)/2 ), + \infty)$.
\end{abstract}

\maketitle

\section{Introduction and main results}

Let $\alpha$ be an irrational real number. 
Let $[a_0; a_1, a_2 \ldots ]$ be its continued fraction expansion and, for $n \ge 1$, let $P_n (\alpha) / Q_n (\alpha)$ 
denote the rational number $[a_0; a_1, a_2, \ldots , a_n]$, called the $n$-th partial quotient of $\alpha$. 
In 1936 Khintchine \cite{Khi36} proved that there exists a real number $\cL$ such that the sequence 
$((\log Q_n (\alpha)) / n )_{n \ge 1}$ converges to $\cL$ for almost all $\alpha$ and the same year 
L\'evy \cite{Levy36} proved that $\cL = \pi^2 / (12 \log 2)$. More generally, we say that $\alpha$ has  
a L\'evy constant if the sequence $((\log Q_n (\alpha)) / n )_{n \ge 1}$ converges, in which case we set 
$$
\cL(\alpha) = \lim_{n \to \infty} \frac 1n  \log Q_n (\alpha). 
$$
It is well known (see e.g. \cite{JaLi88}, \cite{Fai97}) that,
if $\alpha$ has a L\'evy constant, then we have 
\begin{equation}\label{Levylimit}
\cL(\alpha) = \lim_{n\to \infty}\frac{1}{n}\sum_{i=0}^{n-1} \log{([a_i;a_{i+1},\ldots])}.
\end{equation}
Not all irrational numbers $\alpha$ have a L\'evy constant. 
An obvious example is given by 
$[0; 10, 10^{2!}, \ldots , 10^{n!}, \ldots ]$, for which the above limit is equal to infinity. 
A more interesting example is the following. Let $a, b$ be distinct positive integers. 
Let 
$$
\xi_{a, b} := [0; a, b, a, a, b, b, b, b, a, a, \ldots]
$$
denote the real number whose first partial quotient is $a$ and, for $n \ge 0$,
whose $2^n + 1$-th up to $2^{n+1}$-th partial 
quotients are $b$ if $n$ is even and $a$ if $n$ is odd. An easy calculation (see at the end of Section 2 below) 
shows that $\xi_{a, b}$ does not have a L\'evy constant. 
Note that the set of real numbers which do not have a L\'evy constant 
has full Hausdorff dimension \cite[Theorem 3]{PoWe99}; see also \cite{BaSc00}. 

A quadratic real number has a L\'evy constants since
its continued fraction expansion is ultimately periodic;  
see \cite{JaLi88} or Lemma~\ref{lem:JaLi} below. 
In particular, for every positive integer $a$, we have
\begin{equation}\label{formula0}
\cL ([0; a, a, a, \ldots ]) = \log \frac{a + \sqrt{a^2 + 4}}{2}. 
\end{equation}
Since the quadratic real numbers are exactly the real numbers whose 
sequence of partial quotients is ultimately periodic, that is, the real numbers whose continued fraction expansion is the most simple, 
from the point of view of combinatorics on words, we may ask whether all real numbers whose sequence of partial 
quotients is sufficiently simple, in some sense, have a L\'evy constant. 
Let us be more precise. 
For an infinite word ${\bf w} = w_1 w_2 \ldots $ over the alphabet of positive integers, let
$$
p(n, {\bf w}) := \hbox{Card} \{ w_{i+1} \ldots w_{i+n} : i \ge 1 \}
$$
denote the number of distinct factors (subwords)  of length $n$ contained in ${\bf w}$. The function 
$n \mapsto p(n, {\bf w})$ is called the complexity function of $\bw$. 
Clearly, if $(w_j)_{j \ge 1}$ is unbounded, then $p(n, {\bf w})$ is infinite for every $n \ge 1$. 
In the sequel, we tacitly assume that $(w_j)_{j \ge 1}$ is bounded. 
If $(w_j)_{j \ge 1}$ is ultimately periodic, then the sequence $(p(n, {\bf w}))_{n \ge 1}$ is bounded
and $[0; w_1, w_2, \ldots ]$ is a quadratic real number, thus it has a L\'evy constant.
If $(w_j)_{j \ge 1}$ is not ultimately periodic, then the sequence $(p(n, {\bf w}))_{n \ge 1}$ 
satisfies $p(n, {\bf w}) \ge n+1$ for $n \ge 1$. Infinite words ${\bf w}$ for which $p(n, {\bf w}) = n+1$ 
for $n \ge 1$ do exist and are called Sturmian words. They can be described as follows.

Let $a, b$ be distinct positive integers. 
For a real number $\theta$ in $[0,1]$ and a real number $\rho$ with $0 \le \rho < 1$, set
\begin{equation}\label{eq:mechanical word}
s_{n}= 
\begin{cases} 
a &\text{if} ~ \lfloor n \theta + \rho\rfloor  - \lfloor (n-1) \theta + \rho \rfloor = 0, \\
b &\text{if} ~ \lfloor n \theta + \rho \rfloor  - \lfloor (n-1) \theta + \rho \rfloor = 1 
\end{cases}
\end{equation}
and 
\begin{displaymath}
s'_{n}=
\begin{cases} 
a &\text{if} ~  \lceil n \theta + \rho \rceil  - \lceil (n-1) \theta + \rho\rceil = 0, \\
b &\text{if} ~ \lceil n \theta + \rho \rceil  - \lceil (n-1) \theta + \rho\rceil = 1. 
\end{cases}
\end{displaymath}
We also set $\bs_{\theta,\rho} = (s_n)_{n\ge 1}$ and $\bs'_{\theta,\rho} = (s_n')_{n\ge 1}$
and call these infinite words 
\emph{mechanical words}. The real numbers $\theta$ and $\rho$ are called \emph{the slope} and \emph{the intercept}, respectively.
It is well-known (see e.g. \cite[Theorem 2.1.13]{Lo02}) 
that an infinite word is a Sturmian word if and only if it is a mechanical word with an irrational slope.   
Obviously, for a rational $\theta=p/q$, the sequences $\bs_{p/q,\rho}$ and $\bs'_{p/q,\rho}$ are periodic words with period $q$; see Definition~\ref{def:Chr}.

Berth\'e \cite{Be96} showed that the set of Sturmian words over $\{a, b\}$ 
\begin{equation}\label{eq:S_theta}
S_\theta = \{ \bs_{\theta,\rho} \, : \, \rho \ \in [0,1) \} \cup \{ \bs'_{\theta,\rho} \, : \, \rho \ \in [0,1) \}
\end{equation} with the same slope $\theta$ endowed with the 
left-shift map $\sigma : S_\theta \to S_{\theta}$ given by $\sigma(s_1s_2\dots) = s_2s_3 \dots$ is uniquely ergodic. 
Therefore, for $s_1s_2\dots$ in $S_\theta$, 
the Birkhoff sum $$\lim_{n\to\infty} \frac{1}{n}\sum_{i=1}^{n} \log{([s_i;s_{i+1},\ldots])}$$
converges to a value depending only on the slope $\theta$. 
By \eqref{Levylimit}, the real number $[0 ; s_1, s_2, s_3, \ldots]$ has a L\'evy constant, 
whose value depends only  on $\theta$. 
Combined with Cassaigne's description of quasi-Sturmian words \cite{Cassa98}, this yields    
the following result.  





\begin{thm}\label{thm:th1}
Let ${\bf w} = w_1 w_2 \ldots $ be an infinite word over the positive integers. If there exists an integer $k$
such that
$$
p(n, {\bf w}) \le n + k, \quad \hbox{for $n \ge 1$}, 
$$
then the real number $[0; w_1, w_2,\ldots]$ has a L\'evy constant. 
\end{thm}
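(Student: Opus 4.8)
The plan is to split according to whether the complexity function $p(n,\bw)$ is bounded. Note first that $p(1,\bw)$ bounds the number of distinct letters, so the hypothesis forces $\bw$ to be a word over a finite alphabet; hence the partial quotients $w_i$ take only finitely many values, and each tail $[w_i;w_{i+1},\ldots]$ lies between two positive constants depending only on this alphabet, so $\log[w_i;w_{i+1},\ldots]$ is bounded. If $(p(n,\bw))_{n\ge1}$ is bounded, then by the theorem of Morse and Hedlund $\bw$ is ultimately periodic, $[0;w_1,w_2,\ldots]$ is a quadratic irrational, and it has a L\'evy constant by Lemma~\ref{lem:JaLi}. So I may assume $(p(n,\bw))_{n\ge1}$ is unbounded. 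For an aperiodic word $p(n+1,\bw)-p(n,\bw)\ge1$ for every $n$, while the hypothesis $p(n,\bw)\le n+k$ forces $\sum_{n}\bigl(p(n+1,\bw)-p(n,\bw)-1\bigr)$ to be bounded; hence these first differences equal $1$ for all large $n$, i.e. $p(n,\bw)=n+c$ for all $n\ge n_0$ and some constant $c$, and $\bw$ is \emph{quasi-Sturmian}.

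At this point I would invoke Cassaigne's structure theorem \cite{Cassa98}: a quasi-Sturmian word can be written as $\bw=u\,\varphi(\bs)$, where $u$ is a finite word, $\varphi$ is a non-erasing morphism from a two-letter alphabet, and $\bs=s_1s_2\cdots$ is a Sturmian word of some irrational slope $\theta$, so that $\bs$ lies in the uniquely ergodic system $(S_\theta,\sigma)$ of Berth\'e \cite{Be96}. Since prepending the finite word $u$ alters only finitely many terms of the bounded sequence $(\log[w_i;w_{i+1},\ldots])_{i\ge1}$, it affects neither the existence nor the value of the Ces\`aro limit, so I may assume $\bw=\varphi(\bs)$. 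By the relation \eqref{Levylimit} between the L\'evy constant and this Ces\`aro average (see \cite{JaLi88}, \cite{Fai97}), it then suffices to prove that $\frac1N\sum_{i=1}^N\log[w_i;w_{i+1},\ldots]$ converges as $N\to\infty$.

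The core of the argument is to transport the convergence from $(S_\theta,\sigma)$ across the morphism $\varphi$. Writing $L_m=|\varphi(s_1\cdots s_m)|$ (with $L_0=0$), I would define a function $g$ on $S_\theta$ by letting $g(\mathbf t)$ be the sum, over the $|\varphi(t_1)|$ positions of the initial block $\varphi(t_1)$ of $\varphi(\mathbf t)$, of the quantities $\log[\,\cdot\,;\ldots]$ of the corresponding tails of $\varphi(\mathbf t)$. Since $S_\theta$ is a subshift over a finite alphabet and $|\varphi(t_1)|$ is constant on each of the two clopen letter cylinders, $g$ is continuous and bounded. Using $\varphi(\sigma^j\bs)=\sigma^{L_j}\bw$ one gets the exact identity $\sum_{j=0}^{m-1}g(\sigma^j\bs)=\sum_{i=1}^{L_m}\log[w_i;w_{i+1},\ldots]$, the position ranges $[L_j+1,L_{j+1}]$ telescoping to cover $[1,L_m]$. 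Unique ergodicity of $(S_\theta,\sigma)$ gives both $\frac1m\sum_{j=0}^{m-1}g(\sigma^j\bs)\to\int g\,d\mu$ and $\frac{L_m}{m}=\frac1m\sum_{j=1}^m|\varphi(s_j)|\to\bar\ell$, where $\mu$ is the unique invariant measure and $\bar\ell=\int|\varphi(t_1)|\,d\mu\ge1$ is the average block length. Hence $\frac1{L_m}\sum_{i=1}^{L_m}\log[w_i;w_{i+1},\ldots]\to\bar\ell^{-1}\int g\,d\mu$ along the block boundaries $N=L_m$. To reach an arbitrary $N$, I would squeeze it between two consecutive boundaries: since these differ by at most $\ell_{\max}:=\max_x|\varphi(x)|$ and each summand is bounded, the averages at $N$ and at the nearest $L_m$ differ by $O(1/N)$, giving convergence for all $N$; then \eqref{Levylimit} yields the L\'evy constant.

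The step I expect to be the main obstacle is precisely this transfer across $\varphi$: a non-erasing morphism is not length-preserving, so the orbit closure of $\bw$ is not literally a topological factor of $(S_\theta,\sigma)$, and one cannot simply quote ``a factor of a uniquely ergodic system is uniquely ergodic.'' The block-function $g$, together with the interpolation between the boundaries $L_m$, is the device that circumvents this; getting the bookkeeping in $\sum_{j<m}g(\sigma^j\bs)=\sum_{i\le L_m}\log[w_i;w_{i+1},\ldots]$ and the uniform boundedness of the tails exactly right is where the genuine work lies.
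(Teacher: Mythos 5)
Your argument is correct, but it takes a genuinely different route from the paper's own proof of Theorem \ref{thm:th1}. Your reduction to the quasi-Sturmian case (Morse--Hedlund for bounded complexity, then Cassaigne's factorization of $\bw$ as a finite word followed by a morphic image $\phi(\bs)$ of a Sturmian word) coincides with the paper's, but for the core you flesh out the ergodic-theoretic argument that the paper only \emph{sketches} in its introduction and then deliberately replaces: Berth\'e's unique ergodicity of $(S_\theta,\sigma)$, transported across the morphism $\phi$ by your block function $g$ and the interpolation between the block boundaries $L_m$, and finally \eqref{Levylimit} to pass from Ces\`aro averages back to $\frac1n\log Q_n$. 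The paper's Section~\ref{sec:exist} proof is instead purely combinatorial: its key lemma is Proposition~\ref{prop1}, which shows that the continuants of any two equal-length factors of a Sturmian word are comparable up to a factor polynomial in the length, whence $A_n=2^4c\,n^4K(s_1\cdots s_n)$ is submultiplicative and Fekete's lemma gives convergence of $\frac1n\log Q_n$ directly, the morphism being handled by pushing the continuant comparison through $\phi$; no invariant measure, no Birkhoff averages, and no appeal to \eqref{Levylimit}. What each buys: your route is shorter modulo its black boxes and identifies the L\'evy constant as $\bar\ell^{-1}\int g\,d\mu$ against the unique invariant measure; the paper's route is self-contained, quantitative (the limit equals $\inf_n\frac1n\log A_n$, so truncations give bounds), and makes the independence of the intercept transparent. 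One point you should make explicit, since your whole proof funnels through it: \eqref{Levylimit} is stated in the paper only in the direction ``L\'evy constant exists $\Rightarrow$ Ces\`aro limit exists,'' whereas you need the converse. It is true and standard, because $\sum_{i=1}^{n}\log[w_i;w_{i+1},\ldots]=-\log|Q_{n-1}\alpha-P_{n-1}|$ lies between $\log Q_n$ and $\log(Q_n+Q_{n-1})$, so the two averages differ by $O(1/n)$; the paper's introduction uses the same converse implicitly, but in a complete write-up it deserves a line, as does the (easy) continuity of $g$, which rests on $\phi$ being non-erasing so that tails of $\phi(\mathbf t)$ vary continuously with $\mathbf t$.
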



We give a new, purely combinatorial proof of Theorem  \ref{thm:th1} in Section \ref{sec:exist}.

The real numbers $\xi_{a, b}$ defined above show that we cannot hope for a much better result than 
Theorem \ref{thm:th1}. 
Indeed, it is easy to see that the complexity function of the infinite word $\bw_{a,b}$ 
formed by the concatenation of its 
partial quotients satisfies
$2 n \le p(n, \bw_{a,b}) \le 3 n$, for $n \ge 1$ (a careful study yields more precise bounds).

In 1997 Faivre \cite{Fai97} showed that, for all real numbers $\ell$ with $\ell \ge \log((1 + \sqrt{5})/2)$,    
there exists an irrational real number $\alpha$ such that $\ell = \cL (\alpha)$. 
Wu \cite{Wu06}  proved that the L\'evy constants of quadratic irrationalities are dense in the interval 
$[ \log((1 + \sqrt{5})/2), + \infty)$. An alternative proof of Wu's result   
was given in 2008 by Baxa \cite{Baxa08}, who 
established a slightly stronger result, namely that, for all integers $a, b$ with $1 \le a < b$, 
the closure of the set 
$$
\{ \cL (\alpha) : \hbox{$\alpha$ quadratic with partial quotients in $\{a , b\}$} \}
$$
is equal to the whole interval $[\cL([0 ; \overline{a} ]) , \cL ([0 ; \overline{b} ]) ]$. 
Here and throughout the present paper, for positive integers $a_1, \ldots, a_r, a_{r+1}, \ldots , a_{r+s}$ 
and for an integer $a_0$, we denote by $[a_0 ; a_1, \ldots , a_r, \overline{a_{r+1}, \ldots , a_{r+s}}]$ the 
quadratic number whose sequence of partial quotients starts with $a_0, a_1, \ldots , a_r$, followed by its 
periodic part $a_{r+1}, \ldots , a_{r+s}$. 

Our main result is the following refinement of Baxa's result.  
A Sturmian (resp., mechanical) continued fraction is a continued fraction whose sequence of partial quotients is a Sturmian (resp., mechanical) sequence. 
Recall that any mechanical continued fraction is either Sturmian, or represents a quadratic number. 
It is known that Sturmian continued fractions are transcendental numbers \cite{ADQZ01}.

\begin{thm}  \label{thm2}
Let $a, b$ be integers with $1 \le a < b$.
The set of L\'evy constants of mechanical continued fractions with intercept 0 and written over the alphabet $\{a , b\}$ 
is equal to the whole interval $[\cL([0 ; \overline{a} ]) , \cL ([0 ; \overline{b} ]) ]$.   
\end{thm}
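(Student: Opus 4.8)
The plan is to realise every value of the target interval as the Lévy constant of a lower mechanical continued fraction of intercept $0$, by letting the slope run through $[0,1]$ and invoking connectedness. Write $L(\theta) := \cL([0; \bs_{\theta, 0}])$; this is well defined because for irrational $\theta$ it is the value furnished by Berth\'e's unique ergodicity, while for rational $\theta = p/q$ the word $\bs_{p/q, 0}$ is periodic, so $[0; \bs_{p/q, 0}]$ is quadratic and $L(p/q)$ exists by Lemma~\ref{lem:JaLi}. Reading off \eqref{eq:mechanical word} directly gives $\bs_{0,0} = aaa\ldots$ and $\bs_{1,0} = bbb\ldots$, so that $L(0) = \cL([0; \overline a])$ and $L(1) = \cL([0; \overline b])$. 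It therefore suffices to establish two facts: (i) $L(\theta) \in [\cL([0;\overline a]), \cL([0;\overline b])]$ for every $\theta$, and (ii) $L$ is continuous on $[0,1]$. Granting these, $L([0,1])$ is a subinterval of $[\cL([0;\overline a]), \cL([0;\overline b])]$ containing both endpoints, hence equal to it.

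Step (i) is elementary and in fact holds for an arbitrary sequence $\bc = (c_i)_{i\ge 1}$ with $c_i \in \{a,b\}$, which simultaneously yields the inclusion ``$\subseteq$'' of the theorem for the whole family of mechanical continued fractions of intercept $0$. Denoting by $Q_n(\bc)$ the $n$-th continuant, the recurrence $Q_n = c_n Q_{n-1} + Q_{n-2}$ shows by induction that $Q_n$ is coordinatewise nondecreasing in the partial quotients; comparing $\bc$ with the constant sequences gives $Q_n(\overline a) \le Q_n(\bc) \le Q_n(\overline b)$ for all $n$. Applying $\tfrac1n \log(\cdot)$ and letting $n \to \infty$ produces $\cL([0;\overline a]) \le \cL([0;\bc]) \le \cL([0;\overline b])$, which for $\bc = \bs_{\theta,0}$ is the desired containment.

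The heart of the argument is the continuity in step (ii). By \eqref{Levylimit} together with Berth\'e's theorem, one has $L(\theta) = \int f \, d\mu_\theta$, where $f(\bs) = \log([s_1; s_2, \ldots])$ and $\mu_\theta$ is the unique $\sigma$-invariant measure on $S_\theta$ (the uniform measure on the periodic orbit when $\theta$ is rational); I view each $\mu_\theta$ as a Borel probability measure on the compact space $\{a,b\}^{\bN}$. The function $f$ is continuous and bounded there: all partial quotients lie in $\{a,b\}$, so $[s_1;s_2,\ldots] \in [a, b+1]$, and the standard speed-of-convergence estimate for continued fractions, combined with the exponential growth of the continuants forced by $a \ge 1$, shows that the cylinder functions $f_k(\bs) = \log([s_1;\ldots,s_k])$ converge to $f$ uniformly on $\{a,b\}^{\bN}$. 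Hence it is enough to prove that $\theta \mapsto \int f_k\, d\mu_\theta$ is continuous for each fixed $k$; and since $\int f_k\, d\mu_\theta = \sum_{w \in \{a,b\}^k} \mu_\theta([w]) \log([w_1;\ldots,w_k])$, this reduces to the continuity in $\theta$ of each cylinder measure $\mu_\theta([w])$, that is, of the frequency of the factor $w$ in the mechanical word of slope $\theta$.

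The remaining point, which I expect to be the main obstacle, is precisely this continuity of factor frequencies on all of $[0,1]$, including the rational slopes, where the periodic value must be matched. Here I would use the coding of the mechanical word by the rotation $x \mapsto x + \theta$ on $\bR/\bZ$: a factor $w$ of length $k$ occurs exactly when the orbit point lands in an interval $I_w(\theta)$ whose endpoints belong to $\{\,\{-j\theta\} : 0 \le j \le k\,\}$, and $\mu_\theta([w]) = |I_w(\theta)|$. As $\theta$ varies these endpoints move continuously, so each length $|I_w(\theta)|$ is continuous in $\theta$; at a rational slope two endpoints may collide and an interval degenerate to a point, but this merely forces the corresponding frequency to tend continuously to $0$, in agreement with the value read off from the periodic word. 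With this in hand, $\theta \mapsto \int f_k\, d\mu_\theta$ is continuous for every $k$, so $L$ is a uniform limit of continuous functions and hence continuous on $[0,1]$. Combined with step (i) and the endpoint identities, this gives $L([0,1]) = [\cL([0;\overline a]), \cL([0;\overline b])]$, completing the proof.
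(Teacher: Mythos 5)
Your proposal is correct in substance, but it takes a genuinely different route from the paper's. The paper also studies the function $\theta \mapsto \cL([0;\bs_{\theta,0}])$, but it obtains surjectivity by proving that this function is \emph{increasing} and continuous: monotonicity at rational slopes is established algebraically (Proposition~\ref{prop:monotone}), using trace identities for products of continued-fraction matrices (Lemma~\ref{lem:first}), the standard factorization of Christoffel words (Lemma~\ref{lem:second}), and the auxiliary quantities $x_{p/q}$ coming from the mean-value formula of Proposition~\ref{prop:phi}; continuity is then proved quantitatively, at irrational slopes by a direct comparison of Birkhoff sums of $\alpha_\theta$ and $\alpha_{p_k/q_k}$ (Lemma~\ref{lem:cont}), and at rational slopes via linear recurrences for the traces $T(w_{r_n})$ (Lemma~\ref{lem:converge}). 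You dispense with monotonicity entirely: the inclusion of the set into the interval comes from coordinatewise monotonicity of continuants, and surjectivity from continuity plus the intermediate value theorem, with continuity reduced—through unique ergodicity and uniform approximation of $f(\bs)=\log([s_1;s_2,\ldots])$ by cylinder functions—to continuity of the factor frequencies $\mu_\theta([w])$ under the rotation coding. Your route is softer and shorter, and it cleanly isolates where the dynamics enters; what it gives up is precisely the extra output of the paper's method, namely the strict monotonicity of $f$ and the explicit formula of Proposition~\ref{prop:phi}, both of independent interest.

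Two details in your sketch should be nailed down, though neither is a genuine obstruction. First, at a rational slope $p/q$ you must verify that the frequency of $w$ in the \emph{periodic} word $\bs_{p/q,0}$ equals the length $|I_w(p/q)|$; this is exactly where intercept $0$ matters: the orbit $\{ip/q \bmod 1\}$, $0\le i<q$, consists of all $q$ multiples of $1/q$, the endpoints of $I_w(p/q)$ are also multiples of $1/q$, and the half-open convention of the coding intervals makes the point count exactly $q\,|I_w(p/q)|$. Second, rather than tracking "colliding endpoints," it is cleaner to write
$$
I_w(\theta)=\bigcap_{j=1}^{k}\bigl(A_{w_j}(\theta)-(j-1)\theta\bigr), \qquad A_a(\theta)=[0,1-\theta),\quad A_b(\theta)=[1-\theta,1),
$$
and note that each indicator function varies continuously in $L^1$ as $\theta$ moves (symmetric differences of arcs with continuously moving endpoints have small measure), so that $|I_w(\theta)|$ is continuous—indeed Lipschitz—in $\theta$ no matter how the combinatorics of the partition degenerates at rational points. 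With these two points made explicit, and citing either Berth\'e's unique ergodicity or Theorem~\ref{thm:Sturm} for the existence of the L\'evy constants involved, your argument is complete.
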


We display an immediate corollary.

\begin{coro}
For all real numbers $\ell$ with $\ell \ge \log((1 + \sqrt{5})/2)$, there exists a    
Sturmian continued fraction or a quadratic real number whose L\'evy constant is equal to $\ell$. 
Let $a$ be an integer with $a \ge 3$. Then, there exists a mechanical continued fraction $\alpha$  
over $\{1, a\}$ such that $\cL (\alpha) = \cL = \pi^2 / (12 \log 2)$. 
\end{coro}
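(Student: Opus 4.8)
The plan is to deduce both assertions directly from Theorem~\ref{thm2}, using only formula~\eqref{formula0} and the elementary monotonicity of the map $b \mapsto \cL([0;\overline b])$. First I would record the two facts on which everything rests. By \eqref{formula0} with $a=1$ we have $\cL([0;\overline 1]) = \log\bigl((1+\sqrt5)/2\bigr)$, so the left endpoint of the target half-line is itself a quadratic Lévy constant. More generally, $\cL([0;\overline b]) = \log\bigl((b+\sqrt{b^2+4})/2\bigr)$ is strictly increasing in $b$ and tends to $+\infty$ as $b\to\infty$.

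For the first assertion, I would apply Theorem~\ref{thm2} with the alphabet $\{1, b\}$ for each integer $b \ge 2$: it asserts that every value in $[\cL([0;\overline1]), \cL([0;\overline b])]$ is the Lévy constant of some mechanical continued fraction over $\{1,b\}$ with intercept $0$. Taking the union over $b \ge 2$ and using $\cL([0;\overline b]) \to +\infty$ gives
\[
\bigcup_{b \ge 2} \bigl[\cL([0;\overline1]),\, \cL([0;\overline b])\bigr] = \bigl[\log((1+\sqrt5)/2),\, +\infty\bigr).
\]
Hence every $\ell \ge \log((1+\sqrt5)/2)$ is the Lévy constant of a mechanical continued fraction over some alphabet $\{1,b\}$ with intercept $0$. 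Since such a continued fraction is Sturmian when its slope is irrational and represents a quadratic number when its slope is rational, this is exactly the dichotomy in the statement.

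For the second assertion, I would apply Theorem~\ref{thm2} to the alphabet $\{1, a\}$ with the given $a \ge 3$. By monotonicity of $\cL([0;\overline a])$ it suffices to show
\[
\cL([0;\overline1]) \ \le\ \frac{\pi^2}{12\log 2} \ \le\ \cL([0;\overline3]),
\]
for then $\pi^2/(12\log2)$ lies in $[\cL([0;\overline1]), \cL([0;\overline a])]$ and Theorem~\ref{thm2} produces the desired $\alpha$. The lower bound is immediate, since $\cL([0;\overline1]) = \log((1+\sqrt5)/2) \approx 0.4812$ while $\pi^2/(12\log2)\approx 1.1866$. The upper bound is the one delicate point: one has $\cL([0;\overline3]) = \log((3+\sqrt{13})/2) \approx 1.1949$, so the inequality holds but with only a small margin, and I would justify it by a rigorous estimate of both transcendental quantities rather than by a bare decimal comparison. (The threshold is sharp: for $a=2$ one computes $\cL([0;\overline2]) = \log(1+\sqrt2)\approx 0.8814 < \pi^2/(12\log2)$, which is why the hypothesis $a\ge 3$ cannot be weakened.) This numerical separation at $a=3$ is the main, and essentially only, obstacle; every remaining step is a formal consequence of Theorem~\ref{thm2} together with the monotonicity of the quadratic Lévy constants.
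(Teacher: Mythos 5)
Your proposal is correct and follows essentially the same route as the paper, which treats the corollary as an immediate consequence of Theorem~\ref{thm2} together with \eqref{formula0} and the key numerical inequality $\cL = \pi^2/(12\log 2) < \log\bigl((3+\sqrt{13})/2\bigr) = \cL([0;\overline{3}])$. Your write-up merely makes explicit the steps the paper leaves implicit (the union over alphabets $\{1,b\}$ with $\cL([0;\overline{b}])\to+\infty$, and the monotonicity reducing $a\ge 3$ to the case $a=3$), and your observation that $a=2$ fails confirms the sharpness of the hypothesis.
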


The last statement of the corollary follows from the inequality $\cL < \log \frac{3 + \sqrt{13}}{2}$. 

To establish Theorem \ref{thm2} we define a function $f$ on $[0, 1]$ which associates to a given $\theta$   
in $[0, 1]$ the L\'evy constant of some mechanical continued fraction    
and we show that $f$ is increasing and continuous.    
It would be interesting to study more deeply the regularity of $f$.   

Our combinatorial proof of Theorem \ref{thm:th1} is given in Section \ref{sec:exist}, while Section \ref{sec:sec4}    
is devoted to the proof of Theorem \ref{thm2}.   
In passing, we establish in Proposition \ref{prop:phi} a new formula for the L\'evy constant of a 
quadratic number, which appears to be crucial in our proof.

\section{Any Sturmian continued fraction has a L\'evy constant}\label{sec:exist}

Let $K(a_1, \ldots, a_n)$ be the denominator of the rational number $[0;a_1, a_2, \ldots, a_n]$.
Then we have 
$$
\begin{pmatrix} a_1 & 1 \\ 1 & 0 \end{pmatrix} \ldots \begin{pmatrix} a_n & 1 \\ 1 & 0 \end{pmatrix} 
= \begin{pmatrix} K(a_1, \dots, a_n)  & K(a_1, \dots, a_{n-1})  \\ K(a_2, \dots, a_{n})  & K(a_2, \dots, a_{n-1})  \end{pmatrix}.
$$
Therefore, we have 
\begin{equation*}\begin{split}
& K(a_1, \ldots, a_{n+m}) \\
& = K(a_1, \ldots, a_n) K(a_{n+1}, \ldots, a_{n+m}) +  K(a_1, \ldots, a_{n-1})  K(a_{n+2}, \ldots, a_{n+m}) .
\end{split}\end{equation*}
Thus,
\begin{equation}\label{eq:split K}\begin{split}
 K(a_1, \ldots, a_n) K(a_{n+1}, \ldots, a_{n+m}) & < K(a_1, \ldots, a_{n+m}) \\
									& \le 2 K(a_1, \ldots, a_n) K(a_{n+1}, \ldots, a_{n+m}) .
\end{split}\end{equation}
Let us write $K(M) = K( b_1, b_2, \ldots, b_n)$ for a word $M = b_1 b_2 \ldots b_n$.
This is called the continuant of $M$.    

We give a proof that the real number $\xi_{a,b}$ defined in Section 1 has no L\'evy constant. 
By construction, for $m \ge 1$, its first $2^m$ partial quotients are followed by $2^m$
partial quotients equal to $a$ if $m$ is odd and equal to $b$ if $m$ is even. 
Let $(Q_k)_{k \ge 1}$ denote the sequence of denominators of its convergents. 
For $m \ge 2$, we have 
$$
Q_{2^m} K(a^{2^m}) \le Q_{2^{m+1}} \le 2 Q_{2^m} K(a^{2^m}), \quad  
\hbox{if $m$ is odd},
$$
and
$$
Q_{2^m} K(b^{2^m}) \le Q_{2^{m+1}} \le 2 Q_{2^m} K(b^{2^m}), \quad   
\hbox{if $m$ is even}. 
$$
Consequently, if $\xi_{a, b}$ has a L\'evy constant $\ell$, then $\ell$ must satisfy
$$
\ell = \frac{\ell}{2} + \frac{\cL([0, \overline{b}])}{2} =  \frac{\ell}{2} + \frac{\cL([0, \overline{a}])}{2},
$$
a contradiction to \eqref{formula0} with the assumption that $a$ and $b$ are distinct.

Let $\mathbf{w}$ be an infinite word. 
For $n \ge 1$, we denote by $F_{n}(\mathbf{w})$ the set of distinct factors (subwords) of $\mathbf{w}$ of length $n$ and 
observe that $p(n, {\bf w})$ is the cardinality of $F_{n}(\mathbf{w})$. We further let 
$F (\mathbf{w})$ denote the set of all finite factors of $\mathbf{w}$.

We point out that two Sturmian words have the same set 
of factors if and only if they have the same slope; see Proposition 2.1.18 in \cite{Lo02}.

We let $\alpha = [0;\bs] = [0; s_1, s_2, \ldots] $ denote   
the real number whose sequence of partial quotients 
is given by the Sturmian word $\bs = s_1 s_2 \ldots$.
The slope of the Sturmian word $\bs$ is the irrational real number $\theta = [0;1+d_1,d_2,\ldots]$. 
We may assume that $d_1 \ge 1$ since we can set $\theta' = 1-\theta$ otherwise.
%
%
We denote the principal convergents of $\alpha = [0;\bs]$ by ${P_i}/{Q_i}$ and
the principal convergents of $\theta = [0;1+d_1,d_2,\ldots]$ by ${p_i}/{q_i}$.  
Thoroughout this paper, the length of a finite word $M$, that is, the number of letters composing $M$, is    
denoted by $|M|$.    
For a finite word $M = b_1b_2\ldots b_n$, we
denote by $M^-$ its prefix $b_1\ldots b_{n-1}$ of length $n-1$. 

We define the sequence of words $(M_n)_{n \ge -1}$ by setting
\begin{equation}\label{eq:M_n}
M_n = M_{n-1}^{d_n}M_{n-2},
\end{equation}
where $M_{-1} = b$ and $M_{0} = a$.
Then it is known that 
\begin{equation}\label{eq:CF alg of c_theta}
\mathbf{s}_{\theta, \theta} = \lim\limits_{n\to\infty} M_n.  
\end{equation}
and that $|M_n| = q_n$, for $n \ge 0$; see \cite[Proposition 2.2.24]{Lo02}.

We observe that 
$$
(M_1M_0)^{--} = a^{d_1} = (M_0M_1)^{--}.
$$
and claim that, for $k\ge 2$, we have 
\begin{equation}\label{rmk:fact1}
M_kM_{k-1}^{--} = M_{k-1}M_k^{--}.
\end{equation}
This claim can be proved by induction, using that 
$$
M_kM_{k-1}^{--}  = M_{k-1}^{d_k}M_{k-2}M_{k-1}^{--} = M_{k-1}^{d_k}M_{k-1}M_{k-2}^{--}= M_{k-1}M_k^{--}.
$$
By \eqref{eq:CF alg of c_theta}, the Sturmian word $\mathbf{s}_{\theta, \theta}$   
can be written as the concatenation of words $M_k$ and $M_{k-1}$, without two consecutive copies of $M_{k-1}$.  
Let $M$ be a factor in $F_{q_k-1}(\mathbf{s})$.    
Since $F_n(\mathbf{s}_{\theta, \theta}) = F_n(\bs)$ for all $n$,  
combined with \eqref{rmk:fact1}, this shows that
\begin{equation}\label{fact1}
\text{any factor $M$ of length $q_k - 1$ of $\mathbf s$ is a factor of $M_k (M_k)^{- -}$.}
\end{equation}
Moreover, any factor of $\bs$ of arbitrary length can be decomposed as follows.   
\begin{lem}\label{factor}
Let $\bs$ be a Sturmian word.
Let $M$ be a factor of $\mathbf s$ of length $n$ with $q_k \le n \le q_{k+1} -1$ and $(M_k)_{k\ge -1}$ be the sequence of finite words defined as in \eqref{eq:M_n}.
Then, we have either
\begin{enumerate}
\item[(a)] $M$ is a factor of $M_k M_k \dots M_k M_{k-1}$, or
\item[(b)] $M = UV$, where $U$ is a suffix of $M_{k+1}$ and $V$ is a prefix of $M_{k+1}$  with $|V| \ge q_k -1$.
\end{enumerate}
\end{lem}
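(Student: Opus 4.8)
The plan is to fix an occurrence of $M$ inside $\mathbf{s}_{\theta,\theta}$ — legitimate since $F_n(\mathbf s) = F_n(\mathbf{s}_{\theta,\theta})$ — and to read it off against the \emph{level-$k$ block decomposition} of $\mathbf{s}_{\theta,\theta}$. First I would record the structural facts coming from \eqref{eq:M_n} and \eqref{eq:CF alg of c_theta}: iterating $M_{k+1}=M_k^{d_{k+1}}M_{k-1}$ and $M_{k+2}=M_{k+1}^{d_{k+2}}M_k$ shows that $\mathbf{s}_{\theta,\theta}$ is a concatenation of copies of $M_k$ and $M_{k-1}$ in which no two $M_{k-1}$ are adjacent and every maximal run of $M_k$'s has length $d_{k+1}$ or $d_{k+1}+1$. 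Hence the starting positions of consecutive $M_{k-1}$-blocks differ by $q_{k-1}+d_{k+1}q_k=q_{k+1}$ or by $q_{k+1}+q_k$; in particular they are at least $q_{k+1}$ apart.

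Second, I would combine $n\le q_{k+1}-1<q_{k+1}$ with this spacing to control how $M$ meets the separators. A length count shows that the occurrence of $M$ overlaps at most two $M_{k-1}$-blocks and contains at most one of them in full; moreover, if $M$ \emph{begins} strictly inside an $M_{k-1}$-block then it can contain no full $M_{k-1}$-block at all, since reaching the next separator would already cost more than $q_{k+1}$ letters. This reduces the problem to a short list of configurations for the occurrence, which I organize by the overhang past the (at most one) separator.

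Third comes the main case split. If $M$ lies within a single run $M_k^{e}$, or within such a run followed by a prefix of the next $M_{k-1}$, then $M$ is outright a factor of $M_k^{e}M_{k-1}$ and (a) holds. If $M$ contains a full separator $B=M_{k-1}$ with a following piece $V$ of length $|V|\ge q_k-1$, I place the split $M=UV$ at the right end of $B$: since $|U|,|V|<q_{k+1}$ and the run after $B$ has length $d_{k+1}$ or $d_{k+1}+1$, the block structure gives that $U$ (a suffix of a run followed by $B$) is a suffix of $M_{k+1}$ and that $V$ (a prefix of the next run, of length $<q_{k+1}$) is a prefix of $M_{k+1}$, so (b) holds. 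The symmetric situation where $M$ begins inside a separator and the remainder $V$ has $|V|\ge q_k-1$ is handled the same way, with $U$ a suffix of $M_{k-1}$, hence of $M_{k+1}$.

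Finally, the remaining and most delicate situations are exactly those where the overhang $V$ is \emph{short}, $|V|<q_k-1$; here the naive split fails and the relation \eqref{rmk:fact1}, $M_kM_{k-1}^{--}=M_{k-1}M_k^{--}$, is the crucial tool, as it lets me slide a separator past the adjacent $M_k$ up to its last two letters. When $M=(\text{suffix of }M_k^{e})\,M_{k-1}\,V$ with $V$ a short prefix of the next $M_k$, I rewrite $M_{k-1}V$ as $M_k$ followed by a short prefix of $M_{k-1}$, absorb the $M_k$ into the preceding run, and turn $M$ into a factor of $M_k^{e+1}M_{k-1}$, i.e. case (a); the analogous rewriting disposes of the case where $M$ begins inside a separator with short remainder, where $M$ becomes a factor of $M_kM_{k-1}^{--}$. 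The short-factor control \eqref{fact1} backs up the bookkeeping for the partial blocks at the two ends. I expect this re-synchronization step — verifying that every short-overhang configuration folds back, via \eqref{rmk:fact1}, into a factor of $M_k\cdots M_kM_{k-1}$ — to be the main obstacle, since it is where the exact spacing $q_{k+1}$ and the two admissible run-lengths must be combined carefully.
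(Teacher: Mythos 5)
Your proposal is correct, but it takes a substantially longer route than the paper. The paper's proof is three lines: since $n \le q_{k+1}-1$, applying \eqref{fact1} at index $k+1$ (rather than $k$) shows immediately that $M$ is a factor of $M_{k+1}(M_{k+1})^{--}$, so either $M$ is a factor of $M_{k+1} = M_k^{d_{k+1}}M_{k-1}$, giving (a), or $M = UV$ with $U$ a suffix and $V$ a prefix of $M_{k+1}$; and if $|V| \le q_k-2$, then $V$ is a prefix of $M_k^{--}$, so by \eqref{rmk:fact1} the word $M$ is a factor of $M_{k+1}M_k^{--} = M_kM_{k+1}^{--}$, hence again case (a). What you do instead is re-derive by hand the content of that shifted instance of \eqref{fact1}: you unfold the level-$k$ block structure (runs of $M_k$ of length $d_{k+1}$ or $d_{k+1}+1$ separated by single copies of $M_{k-1}$, consecutive separators at distance at least $q_{k+1}$) and classify the occurrence of $M$ by how it meets the separators. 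Your case analysis is complete and each case closes: the spacing bound correctly rules out two full separators inside $M$; splitting at the right end of the separator does give $U$ a suffix and $V$ a prefix of $M_{k+1}$ (your prefix claim silently uses that $M_{k-1}^{-}$ is a prefix of $M_k$, valid since $d_k \ge 1$, and in the full-separator case the sharper bound $|V| \le d_{k+1}q_k - 1$ even makes $V$ a prefix of $M_k^{d_{k+1}}$ outright); and your short-overhang re-synchronization via \eqref{rmk:fact1} is exactly the paper's final step. So the two proofs share the same two ingredients, namely a decomposition relative to $M_{k+1}$ and the commutation relation \eqref{rmk:fact1}, but the paper obtains the decomposition for free by citing \eqref{fact1} at the shifted index, whereas you rebuild it from the run structure. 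Your route is more self-contained and gives an explicit picture of where $M$ sits in the word; its cost is precisely the multi-case bookkeeping that you yourself flag as the main obstacle, and which the shift-the-index observation renders unnecessary.
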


\begin{proof}
By \eqref{fact1}, $M$ is a factor of $M_{k+1}$ or $M = UV$ where $U$ is a suffix of $M_{k+1}$ and $V$ is a prefix of $M_{k+1}$. If $|V| \le q_{k}-2$, then $V$ is a prefix of $M_k^{--}$.
Thus, $M = UV$ is a factor of $M_{k+1} {M_{k}}^{--} = M_k {M_{k+1}}^{--} $ and also a factor of $M_k M_k \dots M_k M_{k-1}$.
\end{proof}


The next Proposition asserts that the ratio of the continuants of two factors of the same length $n$    
of a Sturmian word is bounded from above by some power of $n$.  
This is the key auxiliary result to apply Fekete’s lemma and derive the existence of a L\'evy constant.

\begin{prop}\label{prop1}
Let $\bs$ be a Sturmian word over $\{a,b\}$ and $ c =\max\{b/a,a/b\}$.
For any factor $M$, $M'$ of $\bs$ of length $n$ with with $q_k \le n \le q_{k+1} -1$, we have 
$$
K(M) \le  2^{2k} c K(M').
$$
\end{prop}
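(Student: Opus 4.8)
The plan is to compare $K(M)$ and $K(M')$ by exploiting that, up to a bounded multiplicative error, the continuant is multiplicative under concatenation. Iterating \eqref{eq:split K} gives, for any decomposition of a word into consecutive blocks $B_1\cdots B_t$,
$$
\prod_{i=1}^t K(B_i) < K(B_1\cdots B_t) \le 2^{\,t-1}\prod_{i=1}^t K(B_i).
$$
The decisive observation is that, when one \emph{compares} two continuants rather than estimating each in isolation, any block common to $M$ and $M'$ contributes the same factor to numerator and denominator and therefore cancels; only the cut points carry a cost. Concretely, if I can write $M = U_1 Z W_1$ and $M' = U_2 Z W_2$ with a common central word $Z$, then applying the upper bound in \eqref{eq:split K} to $M$ and the lower bound to $M'$ yields
$$
\frac{K(M)}{K(M')} < 4\,\frac{K(U_1)\,K(W_1)}{K(U_2)\,K(W_2)},
$$
with no dependence on $K(Z)$ whatsoever. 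Thus a long common block is free, and the whole difficulty is pushed into the short boundary words; this is what prevents a fatal factor $2$ per block.

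First I would use Lemma~\ref{factor} to put $M$ (and likewise $M'$) into a normal form adapted to $M_{k+1}=M_k^{d_{k+1}}M_{k-1}$: in case (a) one gets $M = U M_k^{\,r} W$ with $U$ a proper suffix and $W$ a proper prefix of $M_k$, and in case (b) one argues similarly after using the near-commutation identity \eqref{rmk:fact1} to move the cut to an $M_k$--$M_k$ junction. Since $U,W$ are shorter than $M_k$, the boundary word has length $m = n - r q_k < 2q_k$; moreover $UW$ is a factor of $M_kM_k$ or of $M_kM_{k-1}$, hence a genuine factor of $\bs$. Choosing $Z=M_k^{\,j}$ to be the largest power of $M_k$ common to the two normal forms (so $j\ge 1$ as soon as $n\ge q_k$), the display above reduces the comparison of the length-$n$ factors $M,M'$ to the comparison of two boundary factors of strictly smaller length $m<n$, at the cost of a single factor $4=2^2$.

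I would then iterate. The boundary factors of length $m<2q_k$ are re-expressed through the next block decomposition $M_k=M_{k-1}^{d_k}M_{k-2}$, and the same cancellation is run with $M_{k-1}$-blocks, then $M_{k-2}$-blocks, and so on down the scale $M_k,M_{k-1},\dots,M_0$. Each descent multiplies the comparison constant by $4$ and lowers the block size, so after $k$ descents the words are reduced to single letters, for which $K(a)/K(b)=a/b$ and $K(b)/K(a)=b/a$ contribute the base factor $c$. Multiplying the $k$ factors of $4$ by this base factor produces the asserted bound $K(M)\le 2^{2k}c\,K(M')$.

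The main obstacle is the boundary bookkeeping. The two factors $M$ and $M'$ meet the periodic block pattern with different phases, so their normal forms have different cut positions and the residual words $U_1W_1$ and $U_2W_2$, although of equal length, are misaligned; I must check that each residual is honestly a factor of $\bs$ (so that the induction can be fed back in), handling in particular the case $d_{k+1}=1$, where no two $M_k$ occur consecutively and the relevant junction is $M_kM_{k-1}$ rather than $M_kM_k$. The same periodic/conjugacy structure of factors of $M_kM_k$ — two conjugates of a word have continuants within a factor $2$ of each other by \eqref{eq:split K} — is also what guarantees that the level genuinely decreases at each step and lets me track the exact power $2^{2k}$ together with the single factor $c$, rather than an inflated power of $b/a$.
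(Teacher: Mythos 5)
Your proposal contains a genuine gap, and it sits exactly at the entry point of the induction. The claim that the common central power $Z=M_k^{\,j}$ satisfies $j\ge 1$ ``as soon as $n\ge q_k$'' is false: a factor of the periodic word $M_kM_k\cdots M_kM_{k-1}$ of length $n$ with $q_k\le n<2q_k$ need not contain \emph{any} complete aligned copy of $M_k$ (nor even any copy at all: for $M_k=aab$, the factor $abaa$ of length $4>q_k=3$ does not contain $aab$). Since lengths in $[q_k,2q_k)$ occur at every level and are precisely where the induction must get off the ground, your cancellation scheme has nothing to cancel there and stalls. Even where a common block does exist, the accounting does not deliver $2^{2k}c$. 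After cancelling $Z$ you are comparing the \emph{products} $K(U_1)K(W_1)$ and $K(U_2)K(W_2)$ of misaligned pairs; this is not an instance of the proposition, so your induction hypothesis does not apply to it. If you re-concatenate to get back to factor-versus-factor comparisons, \eqref{eq:split K} costs a further factor of $2$ (only one side converts for free), so each descent costs at least $8$, giving at best $2^{3k}c$; moreover the residual length $m=n-jq_k$ can still lie in $[q_k,q_{k+1}-1]$, so the level index need not decrease and you must recurse within a level. If instead you keep the products and descend scale by scale as you describe, the number of boundary pieces doubles at every descent, and since each split costs up to a factor $4$, the accumulated constant is of order $4^{2^k}$ --- doubly exponential in $k$, hence not even polynomial in $q_k$, which would also break the application to Fekete's lemma in Theorem~\ref{thm:Sturm}. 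Finally, case (b) of Lemma~\ref{factor}, where the interrupting $M_{k-1}$ inside $M_{k+1}=M_k^{d_{k+1}}M_{k-1}$ lies in the middle of $M$, is dismissed with one appeal to \eqref{rmk:fact1}; in the paper this is Case (iii), which requires four separate subcases.

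The device you are missing --- and which you half-notice in your remark about conjugates --- is the sliding argument that the paper uses in place of block cancellation. Since $M$ and $M'$ are both factors of the same word of period $q_k$, there exist factors $N,N'$ of $\bs$ with $NM=M'N'$ and $|N|=|N'|<q_k$, \emph{regardless} of whether either word contains a complete aligned block. Then $K(NM)=K(M'N')$ exactly (the entire overlap cancels, not just a central power), and \eqref{eq:split K} together with the induction hypothesis applied to the single pair $N,N'$ --- whose common length is strictly below $q_k$, hence at a strictly lower level --- yields $K(M)\le c\,2^{2k-1}K(M')$ in one step. This reduction produces one pair instead of two, never leaves the class of equal-length factor comparisons, strictly decreases the level, and costs only $2\cdot 2$ per level, which is what produces the clean constant $2^{2k}c$. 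If you wanted to salvage your scheme, you would need to restructure it as a strong induction on the length $n$, handle the blockless range $[q_k,2q_k)$ by the sliding argument anyway, and accept a constant of the form $2^{Ck}$ with $C>2$; that could still suffice for Theorem~\ref{thm:Sturm}, but it would not prove the stated bound.
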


\begin{proof}
Suppose first that $k = 0$. Recall that $q_0 = 1$ and $q_1 = d_1+1$. 
Let $n$ be an integer with $q_0\le n \le q_1-1$. 
A factor of $\bs$ of length $n$ is a factor of $M_1M_1^{--} = a^{d_1}ba^{d_1-1}$.
Then any factor of $\bs$ of length $n$ is either $a^n$, or $a^{n_1} b a^{n_2}$ with $n_1 + n_2 = n-1$.

Assume that $a<b$.
Clearly, $K(a^n)<K(a^{n_1}ba^{n_2})$.
Since
\begin{equation*}\begin{split}
 aK(a^{n_1}b) & = a(bK(a^{n_1})+K(a^{n_1-1})) \\
		& \le b(aK(a^{n_1})+K(a^{n_1-1})) =  bK(a^{n_1+1}),
\end{split}\end{equation*}
it is easy to check that,
$$  aK(a^{n_1} b a^{n_2}) \le b K(a^n) .$$
Thus, $aK(a^{n_1}ba^{n_2})\le bK(a^n) \le bK(a^{n_1'}ba^{n_2'})$ with $n_1'+n_2' = n-1$.
Similarly, for the case of $a>b$, we check that 
$bK(a^{n_1}ba^{n_2})\le bK(a^n) \le aK(a^{n_1'}ba^{n_2'})$.    
Hence, the proposition holds for every positive integer $n$ at most equal to $q_1 - 1$.

We argue by induction. Let $k$ be a positive integer and
suppose that the proposition holds for every positive integer $n$ at most equal to $q_k - 1$. 
Let $n$ be an integer with $q_k \le n \le q_{k+1} -1$.
Let $M, M'$ be two factors of $\bs$ of length $n$. 
By Lemma~\ref{factor}, we distinguish the following cases:

\noindent Case (i): Both of $M, M'$ are factors of $M_k M_k \ldots M_k M_{k-1}$.

Since $M_k M_k \ldots M_k M_{k-1}$ is a periodic word with period $q_k$, there exist factors $N, N'$
of $\bs$ such that $NM = M'N'$ and $|N| = |N'| < q_k$.
Therefore, 
\begin{align*}
K(M) = \frac{K(N)K(M)}{K(N)} & \le \frac{K(NM)}{K(N)} \\
&\le c 2^{2(k-1)}\frac{K(M'N')}{K(N')} \\ 
& \le c 2^{2k-1} \frac{K(M')K(N')}{K(N')} \le c 2^{2k-1} K(M').
\end{align*}

\noindent Case (ii): 
Let $M = UV$ and $M' = U'V'$ where $U, U'$ are (possibly empty) suffixes of $M_{k+1}$ and $V, V'$ are (possibly empty) prefixes of $M_{k+1}$.
We may assume that $|U| < |U'|$. Define the words $N, N'$ by $U' = N U$ and $V = V' N'$. 
It follows from Case (i) that
$$K(N') \le c \cdot 2 \cdot 4^{k-1} K(N).$$
Therefore, 
\begin{align*}
K(M) = \frac{ K(N) K(UV)}{K(N)}  \le \frac{K(N UV)}{K(N)} & \le c 2^{2k-1}  \frac{K(U'V'N')}{K(N')} \\
&\le c 2^{2k-1}  \frac{2 K(U'V')K(N')}{K(N')} = c 2^{2k} K(M').
\end{align*}

\noindent Case (iii): Assume that $M$ is a factor of $M_k \ldots M_k M_{k-1}$ and 
$M' = U V$, where $U$ is a suffix of $M_{k+1}$ and $V$ is a prefix of $M_{k+1}$ with $|V| \ge q_k-1$.
Write $M = N M_k \ldots M_k N'$ where $N$ is a nonempty prefix of $M_k$ and  $N'$ is a possibly empty suffix of $M_k$.

\begin{enumerate}

\item If $|N'| \ge q_{k-1}$, then  $N' = M_{k-1} N''$ is a prefix of $(M_k M_{k-1} )^{--} = (M_{k-1} M_k)^{--}$.
Hence, 
$$M = N M_k \ldots M_k N' = N M_k \ldots M_k M_{k-1} N'',$$
where $N M_k \ldots M_k M_{k-1}$ is a suffix of $M_{k+1}$ and $N''$ is a prefix of $M_k$, which is a prefix of $M_{k+1}$.
We apply the argument of Case (ii).

\item If $|N'| \le q_{k-1}-2$, then $M_k N'$ is a prefix of $(M_k M_{k-1} )^{--} = (M_{k-1} M_k)^{--}$.
Define $N''$ by  $$M_k N' = M_{k-1} N''.$$ 
Since $n \ge q_k$, we get that $N''$ is a suffix of $M$.
We write $M = W N''$, where  $W$ is a suffix of $M_k \ldots M_k M_{k-1}$.
Since $N''$ is a prefix of $M_k$, which is a prefix of $M_{k+1}$, we apply the argument of  Case (ii).

\item Suppose that $|N| \ge 2$. Put $M_k = V'N$. Then $V'$ is a prefix of $V$.
Let $V'', W$ be the words defined by $V = V' V''$ and $M = V'' W$.
Since $U V'$ is a factor of  $M_{k+1} M_{k}^{--} = M_k M_{k+1}^{--}$ and $M_k M_{k+1}^{--}$ 
is periodic with period $q_k$, we have
$$K(W) \le c 2^{2k-1} K(UV'),$$
in a similar way as in Case (i).
Thus, 
\begin{equation*}\begin{split}
K(M) \le 2K(V'')K(W) & \le 2K(V'') \cdot c 2^{2k-1} K(UV') \\
			&  \le c 2^{2k} K(U V' V'' ) = c 2^{2k} K(M'). 
\end{split}\end{equation*}

\item The remaining case is the case where $|N| = 1$ and $|N'| = q_{k-1} -1$.
Then, for some $d\ge 0$, 
\begin{equation*}
M =  
\begin{cases}
a (M_k)^{d+1} (M_{k-1})^-,  &\text{ if $k$ is even}, \\
b (M_k)^{d+1} (M_{k-1})^-,  &\text{ if $k$ is odd},
\end{cases}
\end{equation*}
since $M_k$ is ending with $a$ if $k$ is even and with $b$ otherwise.
Note that $|M'| = |M| = (d+1) q_k + q_{k-1}$.
Since $U$ is a suffix of $(M_{k})^{d_{k+1}}M_{k-1}$ 
and $V$ is a prefix of $(M_{k})^{d_{k+1}}(M_{k-1})^{--}$, we have
$$VU = (M_k)^{d+1} M_{k-1}.$$
If $k$ is even, then
\begin{align*}
K(M) & \le 4 K(a) K( V) K(U^-) \\
& \le 4 c K(b) K( V ) K(U^-)  \le 4 c K(U^- b V ) = 4 c K(M').
\end{align*}
The case of odd $k$ is symmetric. \qedhere
\end{enumerate}
\end{proof}

For our alternative proof of the existence of a L\'{e}vy constant for any Sturmian continued fraction, we   
apply Fekete's lemma.  

\begin{lem}[Fekete]
If a sequence $(a_n)_{n \ge 1}$ of positive real numbers is subadditive, that is, if
it satisfies $a_{n+m}\le a_n+a_m$ for every integers $n, m$, then the sequence $(a_n/n)_{n \ge 1}$ converges 
and
$$
\lim_{n \to \infty} \frac{a_n}{n} = \inf_{n\ge 1} \frac{a_n}{n}.   
$$  
\end{lem}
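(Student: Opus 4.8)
The plan is to set $L = \inf_{n \ge 1} a_n/n$ and to prove that $\limsup_{n\to\infty} a_n/n \le L$; combined with the trivial lower bound $a_n/n \ge L$ valid for every $n$, this forces $\lim_{n\to\infty} a_n/n = L$. Since each $a_n$ is positive, $L$ is a well-defined nonnegative real number, so there is no issue of the infimum being $-\infty$ and the two one-sided bounds genuinely pin down a finite limit.

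First I would fix $\varepsilon > 0$ and, by the definition of the infimum, choose an index $m$ with $a_m/m < L + \varepsilon$. The engine of the argument is Euclidean division together with iterated subadditivity. For an arbitrary $n$, write $n = q m + r$ with $0 \le r < m$. Applying the hypothesis $a_{n+m} \le a_n + a_m$ repeatedly gives $a_{qm} \le q\, a_m$ by an immediate induction on $q$, and hence $a_n = a_{qm+r} \le q\, a_m + a_r$, where the term $a_r$ is simply absent when $r = 0$. Dividing by $n$ yields $a_n/n \le (qm/n)(a_m/m) + a_r/n$.

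Next I would let $n \to \infty$ with $m$ held fixed. Then $qm/n \to 1$, while $a_r/n \to 0$ because $r$ ranges over the finite set $\{0, 1, \dots, m-1\}$, so $a_r$ is bounded by $\max_{0 \le r < m} a_r$, a constant independent of $n$. It follows that $\limsup_{n\to\infty} a_n/n \le a_m/m < L + \varepsilon$, and letting $\varepsilon \to 0$ gives $\limsup_{n\to\infty} a_n/n \le L$. Since $a_n/n \ge L$ for every $n$ forces $\liminf_{n\to\infty} a_n/n \ge L$, we conclude $\lim_{n\to\infty} a_n/n = L = \inf_{n\ge 1} a_n/n$.

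I do not anticipate any genuine obstacle here, as this is the classical Fekete argument. The only points requiring mild care are the bookkeeping when the remainder $r$ vanishes, and the observation that the finitely many initial values $a_1, \dots, a_{m-1}$ are bounded, which is precisely what makes the correction term $a_r/n$ negligible in the limit.
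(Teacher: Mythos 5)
Your proof is correct and is the classical Fekete argument; note that the paper itself states this lemma without proof, citing it as a standard result, so your write-up simply supplies the textbook argument (Euclidean division $n = qm+r$, iterated subadditivity $a_n \le q\,a_m + a_r$, and the vanishing of the bounded correction term $a_r/n$), with the $r=0$ and $n<m$ edge cases properly handled.
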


We have now all the material to establish the following theorem. 

\begin{thm}\label{thm:Sturm}
Let $\alpha  = [0;\bs]$ be a Sturmian continued fraction. 
Then, $\alpha$ has a L\'evy constant $\cL(\alpha)$ and 
$$\cL(\alpha) = \lim\limits_{\nti}\frac{1}{n}{\log Q_n(\alpha)}.$$
\end{thm}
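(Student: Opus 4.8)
The plan is to deduce the existence of $\lim_n \frac1n \log Q_n(\alpha)$ from Fekete's lemma applied to the sequence of \emph{maximal} continuants, and then to use Proposition~\ref{prop1} to pin $\log Q_n(\alpha)$ to that sequence up to a sublinear error. Concretely, for $n\ge 1$ I would set
$$
u_n := \max\{\log K(M) : M \in F_n(\bs)\},
$$
the largest continuant among factors of $\bs$ of length $n$; since every factor is a word over positive integers, $K(M)\ge 1$ and hence $u_n\ge 0$.

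The first step is to check that $(u_n+\log 2)_{n\ge 1}$ is a positive subadditive sequence. Given any factor $M$ of $\bs$ of length $n+m$, I would write $M=M_1M_2$ with $|M_1|=n$ and $|M_2|=m$; both $M_1,M_2$ are again factors of $\bs$, so the upper bound in \eqref{eq:split K} gives $K(M)\le 2\,K(M_1)K(M_2)$ and therefore $\log K(M)\le \log 2 + u_n + u_m$. Maximizing over $M$ yields $u_{n+m}\le u_n+u_m+\log 2$, that is, $(u_n+\log2)$ is subadditive. Fekete's lemma then shows that $(u_n+\log 2)/n$, and hence $u_n/n$, converges to a finite limit $L$ (finiteness follows from $0\le u_n\le n\log(b+1)$).

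The second step is to squeeze $\log Q_n(\alpha)$ between $u_n$ and $u_n-O(\log n)$. On one side, the prefix $s_1\cdots s_n$ is a particular factor of length $n$, so $\log Q_n(\alpha)=\log K(s_1\cdots s_n)\le u_n$. On the other side, choosing $k$ with $q_k\le n\le q_{k+1}-1$ and applying Proposition~\ref{prop1} to a maximizing factor $M$ and to $M'=s_1\cdots s_n$ gives $u_n\le \log(2^{2k}c)+\log Q_n(\alpha)$. Since the $q_k$ are the denominators of the convergents of $\theta$, they satisfy $q_k\ge q_{k-1}+q_{k-2}$ and hence grow at least geometrically, say $q_k\ge \phi^{\,k-1}$ with $\phi=(1+\sqrt5)/2$; thus $q_k\le n$ forces $k\le 1+\log n/\log\phi$, so that $\log(2^{2k}c)=O(\log n)$. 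Dividing the two inequalities by $n$ gives
$$
\frac{u_n}{n}-O\!\Big(\frac{\log n}{n}\Big)\ \le\ \frac1n\log Q_n(\alpha)\ \le\ \frac{u_n}{n},
$$
and letting $n\to\infty$ both bounds tend to $L$, so $\frac1n\log Q_n(\alpha)\to L$. By the definition of the L\'evy constant this is exactly the assertion that $\alpha$ has a L\'evy constant equal to $L=\cL(\alpha)$.

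The genuinely substantive ingredient is Proposition~\ref{prop1}, which is already established; the rest is the routine verification of subadditivity via \eqref{eq:split K} and of the squeeze. I expect the one point that must be handled with care to be precisely the \emph{sublinearity} of the discrepancy $u_n-\log Q_n(\alpha)$: everything hinges on the factor $2^{2k}$ in Proposition~\ref{prop1} being polynomial in $n$, which is what the geometric growth of $(q_k)$ guarantees. Any worse-than-logarithmic growth of this error would break the squeeze and leave the existence of the limit unproven.
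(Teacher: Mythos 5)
Your proof is correct, and it rests on exactly the same two pillars as the paper's proof of Theorem~\ref{thm:Sturm} --- Proposition~\ref{prop1} and Fekete's lemma, glued together by the splitting inequality \eqref{eq:split K} --- but it organizes them differently. The paper applies Fekete directly to a corrected prefix sequence: it sets $A_n = 2^4 c\, n^4 K(s_1,\dots,s_n)$ and proves $A_{n+m}\le A_n A_m$ for $m\le n$, invoking Proposition~\ref{prop1} \emph{inside} the submultiplicativity check (in the polynomial form $K(M)\le c\,n^4K(M')$, via $4^k \le q_k^4 \le n^4$, which is \eqref{MM}) to replace the middle block $K(s_{n+1},\dots,s_{n+m})$ by the prefix continuant $K(s_1,\dots,s_m)$. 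You instead apply Fekete to the maximal continuant $u_n=\max_{M\in F_n(\bs)}\log K(M)$, whose subadditivity (up to $\log 2$) needs only \eqref{eq:split K} and no comparison of factors at all, and you invoke Proposition~\ref{prop1} exactly once, at the end, to show the discrepancy $u_n-\log Q_n(\alpha)$ is $O(\log n)$; your closing remark that everything hinges on $2^{2k}$ being polynomial in $n$ is precisely the point the paper encodes in the chain $4^k\le q_k^4\le n^4$, and your appeal to the geometric growth $q_k\ge\phi^{k-1}$ is the paper's $q_k\ge 2^{k/2}$ in different clothing. Your organization buys two small things: the subadditive sequence is canonical (no ad hoc correction factor $2^4cn^4$), and since $u_n$ depends only on the set of factors of $\bs$, hence only on the slope, the independence of $\cL(\alpha)$ from the intercept --- which the paper deduces as a remark at the end of its proof --- becomes transparent. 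The paper's organization is slightly more compact, as the single submultiplicativity computation dispenses with your separate squeeze step.
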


\begin{proof}
We apply Proposition \ref{prop1}. 
Let $\theta = [0;1+d_1,d_2,\ldots]$, where $d_1 \ge 1$, denote the slope of $\bs$.
For $k \ge 1$, let $q_k$ denote the denominator of $\theta$.  
Let $k$ be a non-negative integer.
Let $n$ be an integer with $q_k \le n \le q_{k+1} -1$.
Let $M,  M'$ be factors of $\bs$ of length $n$. 
Since $q_k \ge 2^{k/2}$, 
we have 
\begin{equation} \label{MM}
K(M) \le 4^{k} c K(M') \le c (q_k)^4 K(M') \le  c n^4 K(M'). 
\end{equation}
Set $A_n = 2^4 c n^4 K(s_1,s_2, \dots , s_n )$.
Then for $m \le n$ we have
\begin{align*}
A_{n+m} &=2^4 c (n+m)^4 K(s_1,s_2, \dots , s_{n+m} )  \\
&\le 2^4 c (2n)^4 2  K(s_1,s_2, \dots , s_{n} ) K(s_{n+1},s_{n+2}, \dots , s_{n+m} ) \\
& \le 2^4 c n^4  K(s_1,s_2, \dots , s_{n} ) 2^4 c m^4 K(s_{1},s_{2}, \dots , s_{m} ) = A_n A_m.
\end{align*}

By Fekete's lemma, 
the following limits exist and are equal:
$$\lim\limits_\nti \frac{1}{n}\log{A_n} = \lim\limits_\nti \frac{1}{n}\log{(2^4 cn^4Q_n(\alpha))} 
= \lim\limits_\nti \frac{1}{n}\log{Q_n(\alpha)}.$$
This proves that $\alpha$ has a L\'evy constant, which, by \eqref{MM} and the fact that two Sturmian words  
with the same slope have the same set    
of factors, does not depend on the intercept of $\bs$.
\end{proof}

\begin{proof}[Completion of the proof of Theorem \ref{thm:th1}]
Let ${\mathbf w} = w_1 w_2 \ldots$ be an infinite word defined over the positive integers such that    
the sequence $(p(n,{\mathbf w}) - n)_{n \ge 1}$ is bounded and ${\mathbf w}$ is not ultimately periodic. 
Since the function $n \mapsto p(n,{\mathbf w})$ is increasing,  
the sequence $(p(n,{\mathbf w}) - n)_{n \ge 1}$ 
of positive integers is eventually constant. Thus, there exist positive integers $k$ and $n_0$ such that
\begin{equation} \label{Qsturm}
p(n,{\mathbf w}) = n + k, \quad \text{ for } n \ge n_0.       
\end{equation}
Infinite words satisfying \eqref{Qsturm} are called \emph{quasi-Sturmian words}. 
We use a relation between Sturmian words and quasi-Sturmian words. This is a result of Paul \cite{Pa75} and Coven \cite{Co74} that has been restated several times; see \cite[Remarque 3]{Al00} for an historical account. We quote here Cassaigne's statement from \cite[Proposition 8]{Cassa98}:
There are a finite word $W$, a Sturmian word $\bs$ defined over $\{a, b\}$ and a 
morphism $\phi$ from $\{a, b\}^*$ into the set of positive integers such that
$\phi (ab) \not= \phi (ba)$ and 
$$  
{\mathbf w} = W \phi (\bs).
$$

We briefly explain that Proposition \ref{prop1} can be suitably extend to the word ${\mathbf w}$.

Put $c_\phi = \max \{ K(\phi(a))/ K(\phi(b)), K(\phi(b))/ K(\phi(a))\}$.
For any nonnegative integers $n_1,n_2,n'_1,n'_2$ with $n_1 + n_2 = n'_1 + n'_2$, 
by \eqref{eq:split K}, we have
\begin{align*}
K(\phi(a^{n_1}ba^{n_2})) &\le 4 K (\phi(a^{n_1})) K (\phi(b)) K (\phi(a^{n_2})) \\
&\le 4  c_\phi K (\phi(a^{n_1 + n_2+1})) \le 4^2 c_\phi K(\phi(a^{n'_1}ba^{n'_2})),
\end{align*}
or
$$
K(\phi(a^{n_1}ba^{n_2})) \le 4 K (\phi(a^{n_1 + n_2+1})) \le 4^2 c_\phi K(\phi(a^{n'_1}ba^{n'_2})), 
$$
depending on the fact that $K(\phi(a)) \le K(\phi(b))$ or $K(\phi(b)) \le K(\phi(a))$.  
Therefore, by replacing $K(M),~ K(M')$ with $K(\phi(M))$, $K(\phi(M'))$ in the proof of Proposition~\ref{prop1}, 
we conclude that, for any factors $N, ~ N'$ of $\bs$ with $q_k \le |N|=|N'| \le q_{k+1}-1$, we get 
$$
K(\phi(N)) \le 4^{k+2} c_\phi K(\phi(N')).
$$
Set  
$h = \max \{ | \phi(a)|, | \phi(b) |\}$.
Let $M, ~ M'$ be factors of the same length of $\phi(\bs)$.
Let $L$ (resp., $N$) be the word of minimal (resp., maximal) length   
such that $M$ is a factor of $\phi(L)$ (resp., $\phi(N)$ is a factor of $M'$).
Since $\bs$ is a balanced word, we have $|L|-|N| \le 6$.
Setting $\tilde c = \max\{ K(M) \, | \, M = \phi(N) \text{ for } |N|= 6  \}$, we get
$$
K(M) \le K(\phi(L)) \le 2 \tilde c \cdot 4^{k+2} c_\phi K(\phi(N)) \le 2 \tilde c \cdot 4^{k+2} c_\phi K(M'),
$$
and we conclude as in the proof of Theorem \ref{thm:Sturm}. 
We observe that the L\'evy constant of $[0; w_1, w_2, \ldots]$ depends only on the slope   
of the Sturmian word $\bs$.  
\end{proof}

\section{The L\'evy constants of quadratic numbers}\label{sec:sec3}

The L\'evy constants of quadratic numbers have been discussed in many papers. 
Jager and Liardet \cite{JaLi88} applied a result of Kiss \cite{Ki82} to compute them; 
see also the papers of Lenstra and Shallit \cite{LeSh93} and of Belova and Hazard \cite{BeHa17}.

\begin{lem}[Jager and Liardet]\label{lem:JaLi} 
If $\alpha$ is the quadratic irrational whose continued fraction expansion is 
given by $[a_0;a_1,\ldots,a_r,\overline{a_{r+1},\ldots,a_{r+s}}]$, then
$$
\cL(\alpha) = \frac{1}{s} \log \frac{t + \sqrt{t^2 - (-1)^{s} 4}}{2}, 
$$
where $t$ is the trace of 
$$
\begin{pmatrix} a_{r+1} & 1 \\ 1 & 0 \end{pmatrix} \begin{pmatrix} a_{r+2} & 1 \\ 1 & 0 \end{pmatrix} \ldots \begin{pmatrix}a_{r+s} & 1 \\ 1 & 0 \end{pmatrix}. 
$$
\end{lem}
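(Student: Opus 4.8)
The plan is to express the L\'evy constant of a purely periodic (or ultimately periodic) quadratic number via the growth rate of the continuant sequence, and then identify that growth rate with the dominant eigenvalue of the product matrix, whose trace is $t$. The starting point is the matrix identity already recorded in Section~\ref{sec:exist}: writing
$$
\bfA = \begin{pmatrix} a_{r+1} & 1 \\ 1 & 0 \end{pmatrix} \cdots \begin{pmatrix} a_{r+s} & 1 \\ 1 & 0 \end{pmatrix},
$$
the periodicity of the continued fraction means that, after discarding the finite preperiod $a_0,\ldots,a_r$ (which cannot affect the limit defining $\cL(\alpha)$), the denominators $Q_n(\alpha)$ along the subsequence $n = r + ks$ are, up to bounded multiplicative factors coming from \eqref{eq:split K}, comparable to the entries of $\bfA^k$. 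Since $\frac1n \log Q_n$ already converges by Lemma~\ref{lem:JaLi}'s hypothesis (the expansion being ultimately periodic), it suffices to compute the limit along this arithmetic subsequence, where $\frac{1}{ks}\log \|\bfA^k\|$ governs the behaviour.

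First I would compute $\det \bfA = (-1)^s$, since each factor matrix has determinant $-1$ and there are $s$ of them. Next, the two eigenvalues $\lambda_{\pm}$ of $\bfA$ satisfy $\lambda_+ \lambda_- = (-1)^s$ and $\lambda_+ + \lambda_- = \Tr(\bfA) = t$, so they are the roots of $X^2 - tX + (-1)^s = 0$, giving
$$
\lambda_{\pm} = \frac{t \pm \sqrt{t^2 - (-1)^s \cdot 4}}{2}.
$$
Because all partial quotients are positive integers, $\bfA$ is a product of nonnegative integer matrices with positive trace, so $t \ge 1$ and the dominant eigenvalue $\lambda_+$ is real, positive, and strictly larger in modulus than $\lambda_-$ (here one checks $t^2 - (-1)^s 4 > 0$, which holds since $t \ge 2$ for any genuine product of at least one such matrix with the standing positivity assumptions; the degenerate small cases are handled directly). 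The entries of $\bfA^k$ then grow like $\lambda_+^k$, so
$$
\lim_{k\to\infty} \frac{1}{k} \log \|\bfA^k\| = \log \lambda_+ = \log \frac{t + \sqrt{t^2 - (-1)^s 4}}{2}.
$$

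To convert this into the L\'evy constant I would use that $Q_{r+ks}(\alpha)$ equals a fixed entry of the matrix $\bfB \bfA^k$, where $\bfB$ is the product over the preperiod $a_1,\ldots,a_r$; since $\bfB$ is a fixed invertible-over-$\bR$ matrix, multiplying by it changes $\log Q_{r+ks}$ only by a bounded amount and hence does not affect $\frac{1}{ks}\log Q_{r+ks}$ in the limit. Combining with the normalization by the period length $s$ yields
$$
\cL(\alpha) = \lim_{k\to\infty} \frac{1}{ks} \log Q_{r+ks}(\alpha) = \frac{1}{s}\log \lambda_+ = \frac{1}{s} \log \frac{t + \sqrt{t^2 - (-1)^s 4}}{2},
$$
and since the full limit $\frac1n \log Q_n$ exists, its value agrees with that of this subsequence.

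The main obstacle I anticipate is the rigorous passage from the matrix norm growth to the continuant along non-multiples of $s$, together with confirming strict eigenvalue dominance in the borderline cases. The sign $(-1)^s$ inside the radical is the delicate bookkeeping point: one must track whether $\lambda_+\lambda_- = +1$ or $-1$, since this determines both the discriminant and whether $\lambda_-$ could interfere with the growth rate. For even $s$ one has $\lambda_+\lambda_-=1$ with $\lambda_->0$, while for odd $s$ one has $\lambda_+\lambda_-=-1$ with $\lambda_-<0$; in either case $|\lambda_-| = 1/\lambda_+ < 1 < \lambda_+$, so the subdominant eigenvalue contributes a vanishing term to $\frac1k\log\|\bfA^k\|$. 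Handling this uniformly, rather than case by case, is where I would concentrate the care, appealing if convenient to a Perron--Frobenius-type argument for the primitive nonnegative matrix $\bfA$ (or $\bfA^2$) to guarantee a positive simple dominant eigenvalue.
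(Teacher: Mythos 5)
Your overall route --- identifying the growth rate of the denominators with the dominant eigenvalue $\lambda_+$ of the period matrix $\bfA$, whose trace is $t$ and determinant $(-1)^s$ --- is exactly the mechanism behind the paper's treatment: the paper quotes the result from Jager and Liardet and records as the key fact the recurrence $Q_{n+2s} = t\,Q_{n+s} - (-1)^s Q_n$, which is nothing but the Cayley--Hamilton identity $\bfA^2 = t\bfA - (-1)^s I$ applied to the convergent matrices; the characteristic roots of that recurrence are your $\lambda_\pm$. So your computation of the value (determinant and discriminant bookkeeping, Perron--Frobenius dominance of $\lambda_+$) is sound and matches the paper's underlying argument.

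The genuine flaw is the sentence ``Since $\frac1n\log Q_n$ already converges by Lemma~\ref{lem:JaLi}'s hypothesis (the expansion being ultimately periodic), it suffices to compute the limit along this arithmetic subsequence.'' This is circular: the hypothesis is only that $\alpha$ has an ultimately periodic expansion, and the fact that such an $\alpha$ has a L\'evy constant at all is part of the \emph{conclusion} of this very lemma --- indeed the paper invokes Lemma~\ref{lem:JaLi} precisely as the reason quadratic numbers possess L\'evy constants. You may not assume the full limit exists and then evaluate it along $n = r+ks$; existence must be proved. The repair is short and should be made explicit: either note that for each fixed residue $j \in \{0,\ldots,s-1\}$ the subsequence $(Q_{r+j+ks})_{k\ge 0}$ satisfies the same second-order recurrence (equivalently, is a fixed entry of $\bfB_j\bfA^k$ for a fixed matrix $\bfB_j$), so that $\frac{1}{r+j+ks}\log Q_{r+j+ks}\to\frac1s\log\lambda_+$ for every $j$, and these $s$ interlaced subsequences exhaust all large $n$; or use monotonicity of $(Q_n)_{n\ge1}$: for $r+ks\le n< r+(k+1)s$ one has
\[
\frac{\log Q_{r+ks}}{r+(k+1)s}\;\le\;\frac{\log Q_n}{n}\;\le\;\frac{\log Q_{r+(k+1)s}}{r+ks},
\]
and both bounds tend to $\frac1s\log\lambda_+$. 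With either fix, existence and the stated value of $\cL(\alpha)$ are obtained simultaneously, and the rest of your argument stands.
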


The key argument in the proof is the fact that, denoting by $(P_n / Q_n)_{n \ge 1}$ the 
sequence of convergents to 
$$
\alpha = [0 ; \overline{a_{r+1}, \ldots , a_{r+s}}],
$$
we have $Q_{n+2s} = t Q_{n+s} - (-1)^{s} Q_n$, for $n \ge 1$, where $t$ is as in Lemma \ref{lem:JaLi}.

Our first auxiliary result in this section is an alternative expression for the 
L\'evy constant of a quadratic irrational number. 
We first need to introduce some notation.

\begin{nota}
\begin{enumerate}
\item
For real numbers $a_1, \ldots , a_n$, set
$$
T (a_1, \dots, a_n) 
= \Tr \left(  \begin{pmatrix} a_1 & 1 \\ 1 & 0 \end{pmatrix} \ldots \begin{pmatrix} a_n & 1 \\ 1 & 0 \end{pmatrix} \right).
$$

\item 
For $n\ge 1$, we define polynomials $\bfT_n(x)$ by 
$$\bfT_n(x) = \Tr \left( X^n \right) = T(x, \ldots , x),$$
where
$$ X =  \begin{pmatrix}x & 1 \\ 1 & 0 \end{pmatrix}.$$
\end{enumerate}
\end{nota}

\begin{lem}\label{lem:first}
Let $U, V$ be $2 \times 2$ matrices. 
If $U = WV$ or $VW$, then we have
\begin{equation*}
\Tr( VU ) = \Tr( UV ) = \Tr( U ) \Tr( V ) - \det(V) \Tr( W ).
\end{equation*}
In particular, for any positive integers $q, q'$ with $q \ge q'$, we have 
\begin{equation}\label{eq1}
\Tr( X^{q+q'}) = \Tr( X^q ) \Tr( X^{q'}) - (-1)^{q'} \Tr(X^{q-q'}). 
\end{equation}
\end{lem}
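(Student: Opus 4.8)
The plan is to reduce the identity to the elementary theory of the adjugate of a $2\times 2$ matrix, which lets me avoid inverting $V$ and so covers the case of a singular $V$ as well. The first equality $\Tr(VU)=\Tr(UV)$ is just the cyclic invariance of the trace and needs no hypothesis on $U,V$. For the second equality, I would recall that every $2\times 2$ matrix $V$ satisfies the Cayley--Hamilton relation $V^2-\Tr(V)\,V+\det(V)\,I=0$; equivalently, its adjugate $\mathrm{adj}(V):=\Tr(V)\,I-V$ obeys the fundamental relation $V\,\mathrm{adj}(V)=\mathrm{adj}(V)\,V=\det(V)\,I$, valid whether or not $V$ is invertible.

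First I would establish the general trace formula. Multiplying $\mathrm{adj}(V)=\Tr(V)\,I-V$ on the left by $U$ and taking traces gives
$$
\Tr\bigl(U\,\mathrm{adj}(V)\bigr)=\Tr(V)\,\Tr(U)-\Tr(UV),
$$
so that
$$
\Tr(UV)=\Tr(U)\,\Tr(V)-\Tr\bigl(U\,\mathrm{adj}(V)\bigr).
$$
It then remains to evaluate $\Tr\bigl(U\,\mathrm{adj}(V)\bigr)$ under the hypothesis $U=WV$ or $U=VW$. In the first case, $U\,\mathrm{adj}(V)=W\,V\,\mathrm{adj}(V)=\det(V)\,W$, hence $\Tr\bigl(U\,\mathrm{adj}(V)\bigr)=\det(V)\,\Tr(W)$. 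In the second case, cyclicity gives $\Tr\bigl(VW\,\mathrm{adj}(V)\bigr)=\Tr\bigl(W\,\mathrm{adj}(V)\,V\bigr)=\det(V)\,\Tr(W)$ as well. Substituting yields $\Tr(UV)=\Tr(U)\Tr(V)-\det(V)\Tr(W)$, the claimed identity.

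For the displayed special case \eqref{eq1}, I would apply the identity with $U=X^{q}$, $V=X^{q'}$, and $W=X^{q-q'}$; the assumption $q\ge q'$ guarantees that $W$ is a genuine nonnegative power, and clearly $U=WV=VW$ and $UV=X^{q+q'}$. Since $\det X=-1$, one has $\det(V)=(\det X)^{q'}=(-1)^{q'}$ and $\Tr(W)=\Tr(X^{q-q'})$, giving exactly $\Tr(X^{q+q'})=\Tr(X^q)\Tr(X^{q'})-(-1)^{q'}\Tr(X^{q-q'})$. I do not expect a genuine obstacle here: the only points requiring care are to route the argument through the adjugate rather than through $V^{-1}$, so that the lemma holds for arbitrary (possibly singular) $V$, and to keep track of the two symmetric cases $U=WV$ and $U=VW$.
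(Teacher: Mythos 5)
Your proof is correct and is essentially the same as the paper's: both rest on the Cayley--Hamilton identity for $2\times 2$ matrices, since your adjugate relation $V\,\mathrm{adj}(V)=\det(V)\,I$ with $\mathrm{adj}(V)=\Tr(V)\,I-V$ is just that identity rearranged, and the paper's computation $UV=WV^2=\Tr(V)U-\det(V)W$ likewise never inverts $V$, so singular $V$ is covered in both arguments. The specialization to $U=X^q$, $V=X^{q'}$, $W=X^{q-q'}$ with $\det X=-1$ matches the paper exactly.
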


\begin{proof}
If $U = WV$, then by the Cayley-Hamilton Theorem, we have
\begin{equation}
UV = WV^2 = W ( \Tr(V) V - \det(V) I ) = \Tr(V)U - \det(V) W,
\end{equation}
where $I$ is the $2 \times 2$ identity matrix. Then we obtain the result by taking the trace. 
The case $U = VW$ is similar.
Finally, taking $U = X^q$ and $V=X^{q'}$, we immediately derive \eqref{eq1}. 
\end{proof}

Using \eqref{eq1} we have 
\begin{equation}\label{eq1p}
\bfT_{n+1} (x) =  x \bfT_n (x) + \bfT_{n-1} (x), \quad  \bfT_0 (x) =  2, \quad \bfT_1 (x) =  x.
\end{equation}
Then we have
\begin{equation}\label{eq:Tn(x)}
\bfT_n(x) = \left(\frac{x+\sqrt{x^2+4}}{2}\right)^n+\left(\frac{x-\sqrt{x^2+4}}{2}\right)^n.
\end{equation}
Let $q$,  $q'$ be positive integers with $q \ge q'$. Then \eqref{eq1} also implies that 
for $x, y >1$  
\begin{equation}\label{gap_ineq}
\begin{split}
\frac{\bfT_{q+q'}(x) - \bfT_{q+q'}(y)}{\bfT_{q}(x) - \bfT_{q}(y)} &=  
\bfT_{q}(x) \frac{\bfT_{q'}(x) - \bfT_{q'}(y) }{{\bfT_{q}(x) - \bfT_{q}(y)}} 
+ \bfT_{q'}(y) \frac{\bfT_{q}(x) - \bfT_{q}(y) }{{\bfT_{q}(x) - \bfT_{q}(y)}}  \\
&\qquad   - (-1)^{q'} \frac{\bfT_{q-q'}(x) - \bfT_{q-q'}(y)}{{\bfT_{q}(x) - \bfT_{q}(y)}}  \\
&< \bfT_{q}(x) + \bfT_{q'}(y)  +1. 
\end{split}
\end{equation}

Let $a_1, \ldots , a_n$ be positive integers. 
Since all the coefficients of $\bfT_n(x)$ are nonnegative
integers, we have $\bfT_n'(x)>0$ for all positive real numbers $x$.
If $n$ is even, then $T (a_1,\ldots,a_n)>2$ and $\bfT_n(0)=2$.
If $n$ is odd, then $T(a_1,\ldots,a_n)>0$ and $\bfT_n(0)=0$.
Thus, in any case, there is a unique positive $\mu$ such that $\bfT_n(\mu)=T(a_1,\ldots,a_n)$. This real number $\mu$ can be seen as being a mean of $a_1, \ldots , a_n$.

\begin{prop}\label{prop:phi}
If $\alpha$ is the quadratic irrational whose continued fraction expansion is 
given by $[a_0;a_1,\ldots,a_r,\overline{a_{r+1},\ldots,a_{r+s}}]$, then
$$
\cL(\alpha)  = \log \frac{ \mu + \sqrt{\mu^2+4}}{2},
$$
where $\mu$ is the positive real number such that 
$$  \bfT_s (\mu) = T(a_{r+1}, \dots , a_{r+s}).$$
\end{prop}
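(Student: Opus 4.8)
The plan is to reconcile the formula of Lemma~\ref{lem:JaLi} with the claimed expression by viewing both as the logarithm of the dominant eigenvalue of a companion matrix, written through two equal quantities. First I would set $t = T(a_{r+1},\dots,a_{r+s})$, so that by Lemma~\ref{lem:JaLi} we have $\cL(\alpha) = \frac1s \log\beta$, where $\beta = \frac{t+\sqrt{t^2-(-1)^s 4}}{2}$ is the larger root of $z^2 - tz + (-1)^s = 0$ (the roots of this polynomial multiply to $(-1)^s$). On the other side, I would introduce $\lambda = \frac{\mu+\sqrt{\mu^2+4}}{2}$, the larger root of $z^2 - \mu z - 1 = 0$, so that the target value is exactly $\log\lambda$; here $\lambda>1$ since $\mu>0$, and the companion relation gives the second root as $-\lambda^{-1}$.

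The key identity comes from \eqref{eq:Tn(x)}: since the two roots of $z^2-\mu z-1$ are $\lambda$ and $-\lambda^{-1}$, we have $\bfT_s(\mu) = \lambda^s + (-1)^s\lambda^{-s}$. The defining property of $\mu$, namely $\bfT_s(\mu) = t$, then reads $\lambda^s + (-1)^s\lambda^{-s} = t$. Setting $u = \lambda^s$ and $v = (-1)^s\lambda^{-s}$, I would check the two relations $u+v = t$ and $uv = (-1)^s$; these say precisely that $u$ and $v$ are the two roots of $z^2 - tz + (-1)^s = 0$, the very polynomial whose roots are $\beta$ and $(-1)^s/\beta$.

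To finish, I would verify that $u = \lambda^s$ is the \emph{larger} of the two roots: since $\lambda>1$ we have $u = \lambda^s > 1 > \lambda^{-s} = |v| \ge v$, so $u > v$. Explicitly, $(u-v)^2 = (u+v)^2 - 4uv = t^2 - (-1)^s 4$ together with $u-v>0$ yields $u = \frac{t+\sqrt{t^2-(-1)^s4}}{2} = \beta$. Consequently $\beta = \lambda^s$, and therefore $\cL(\alpha) = \frac1s\log\beta = \frac1s\log\lambda^s = \log\lambda = \log\frac{\mu+\sqrt{\mu^2+4}}{2}$, as claimed. The existence and uniqueness of the positive $\mu$ with $\bfT_s(\mu) = t$ were already recorded in the paragraph preceding the statement, so no extra argument is needed there.

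Rather than a genuine obstacle, the only point demanding care is the sign bookkeeping when $s$ is odd, where $(-1)^s = -1$ forces the second root $v$ to be negative; I would track this throughout to be sure that the discriminant $t^2-(-1)^s4$ and the selection of the dominant root are handled correctly. Everything else is a routine eigenvalue computation powered by \eqref{eq:Tn(x)} and the trace recursion \eqref{eq1}.
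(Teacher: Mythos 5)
Your proof is correct and follows essentially the same route as the paper: both apply Lemma~\ref{lem:JaLi}, use \eqref{eq:Tn(x)} to write $\bfT_s(\mu)$ as a power sum of the roots of $z^2-\mu z-1$ (your $\lambda$, $-\lambda^{-1}$ are the paper's $r_1$, $r_2$), and then identify $\frac{t+\sqrt{t^2-(-1)^s4}}{2}$ with $\lambda^s$ via the same discriminant computation $(u-v)^2=t^2-(-1)^s4$. Your explicit check that $\lambda^s$ is the larger root is a point the paper leaves implicit, but it is the same argument.
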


\begin{proof}

It follows from Lemma~\ref{lem:JaLi} and the definition of $\mu$ that 
\begin{equation*}\begin{split}
\cL(\alpha) & =  \frac{1}{s} \log\frac{T(a_{r+1}, \dots , a_{r+s}) +\sqrt{T(a_{r+1}, \dots , a_{r+s})^2-(-1)^s4}}{2} \\ 
& =  \frac{1}{s} \log\frac{\bfT_s( \mu)+\sqrt{\bfT_s (\mu)^2-(-1)^s4}}{2}.
\end{split}\end{equation*}
Setting
$$
r_1 = \frac{\mu+\sqrt{\mu^2+4}}{2}, \quad r_2 = \frac{\mu-\sqrt{\mu^2+4}}{2},  
$$
it follows from \eqref{eq:Tn(x)} that
$$
\bfT_s (\mu)^2 - (-1)^s 4  =  \left( r_1^s + r_2^s \right)^2 - 4 r_1^s r_2^s =  \left( r_1^s - r_2^s \right)^2
$$
and
$$
\frac{\bfT_s( \mu)+\sqrt{\bfT_s (\mu)^2-(-1)^s4}}{2} = r_1^s. 
$$
This establishes the proposition. 
\end{proof}

\section{Proof of Theorem \ref{thm2}}\label{sec:sec4}

In the sequel, the words are written over the alphabet $\{a, b\}$, where $a, b$ are integers with $1 \le a < b$. 
We recall the mechanical word $\bs_{p/q,\rho}$, defined in \eqref{eq:mechanical word}  is 
purely periodic with period $q$.   

\begin{defn}\label{def:Chr}
Let $p/q$ be a rational number in $[0, 1]$.
The \emph{lower Christoffel word} of slope $p/q$ is the prefix of $\bs_{p/q,0}$ 
 of length $q$. 
We denote it by $w_{p/q}$. 
\end{defn}

The following words are examples of lower Christoffel words: 
\begin{equation}\label{eq:ex Chr}
w_{0/1} = a, \ w_{1/1} = b, \ w_{1/2} = ab, \ w_{1/3} = aab, \ w_{2/5} = aabab, \cdots .
\end{equation}
%
We refer the reader to \cite{BLRS09} and \cite{Ai15} for additional results on Christoffel words.

For shorten the notation, for a finite word $v = v_1 \ldots v_n$  
over the positive integers, we write 
$$
[0 ; \overline{v}] = [0 ; \overline{v_1, \ldots , v_n}] \quad
\text{and} \quad
T(v) = T(v_1, \ldots , v_n). 
$$

We recall that we will show that
$$\{\cL([0;\bs]): \bs \text{ is a mechanical word over }\{a,b\} \: \} = \left[\cL([0;\overline{a}]),\cL([0;\overline{b}])\right].$$
To show it, 
we define the function $f$ on $[0, 1]$ by setting
\begin{equation}\label{eq:f}
f(\theta) =\cL (\alpha_{\theta}), \quad \hbox{for $\theta$ in $[0, 1]$,}
\end{equation}
where $\alpha_{\theta}$ is defined by
$$
\alpha_\theta  = [0; \bs_{\theta,0}].
$$
Note that if $\theta = p/q$, then $\bs_{\theta,0} = \overline{w_{p/q}}$.

Since Sturmian continued fractions and quadratic irrationals have L\'{e}vy constants, 
the function $f$ is well-defined on $[0,1]$.
Note that $f(0) = \cL ([0;  \overline{a}])$ and  $f(1) = \cL ([0;  \overline{b}])$,

We first give the structure of the proof of Theorem~\ref{thm2},  
before stating the propositions and lemmas we will use in the proof.

\begin{proof}[Proof of Theorem~\ref{thm2}]
Our aim is to prove that $f$ is increasing and continuous. 
Then we deduce $f([0,1]) = [\cL ([0;  \overline{a}]), \cL ([0;  \overline{b}])]$.

In Proposition~\ref{prop:monotone}, we will show that the function $f$ is monotone increasing on the rationals in $[0,1]$.
After then, we will see that $f$ is monotone increasing on the whole 
interval $[0,1]$ by using Lemma~\ref{lem:cont}. 

Furthermore, for an irrational number $\alpha$ in $[0,1]$, we have 
$$f(\alpha)  = \lim_{k\to\infty}f(p_{2k}/q_{2k}) = \lim_{k\to\infty}f(p_{2k+1}/q_{2k+1}),$$
 where the sequences of rational numbers 
 $(p_{2k}/q_{2k})_{k \ge 1}$ and $( p_{2k+1}/q_{2k+1} )_{k \ge 1}$ 
 are monotone increasing and monotone decreasing to $\alpha$, respectively. Thus $f$ has no jump discontinuity at $\alpha$.
Finally, in Lemma~\ref{lem:converge}, we will check that $f$ has no jump discontinuities at rational points. 
Therefore, $f$ is continuous on $[0,1]$. 
\end{proof}


We introduce further notation.
For any rational number $p/q$ in $[0, 1]$, 
we denote by $x_{p/q}$
the positive real solution of
\begin{equation}\label{eq:polyT}
\bfT_q( x_{p/q} ) =  T ( w_{p/q}).
\end{equation}
It has been shown just above Proposition \ref{prop:phi} that $x_{p/q}$ is well-defined.
Setting 
$$X_{p/q} = \begin{pmatrix} x_{p/q} & 1 \\ 1 & 0 \end{pmatrix},$$
it follows from \eqref{eq:polyT} that we have 
$$
\Tr(X_{p/q}^q) = T (w_{p/q}).
$$

\begin{exam}\label{ex:prop:phi}
\begin{enumerate}
\item For $n=1$, we have that 
$$
\mathcal L( [0; \overline{a}] ) =  \log \frac{a + \sqrt{a^2 + 4}}{2}
$$
and that $x_{0/1} =a$.    

\item For $n=2$, using Lemma~\ref{lem:JaLi}, we have
\begin{align*}
\mathcal L( [0; \overline{a,b}] )  
= \frac{1}{2} \log \frac{ab +2 + \sqrt{ (ab+2)^2 - 4}}{2}= \frac{1}{2} \log \frac{ab +2 + \sqrt{ ab (ab+4) }}{2}.
\end{align*}
Thus, we have
\begin{align*}
& \log \frac{\sqrt{ab} + \sqrt{ab+4}}{2}
= \frac 12 \log \left( \frac{\sqrt{ab} + \sqrt{ab+4}}{2}\right)^2 \\
& = \frac 12 \log \frac{ 2ab + 4 + 2 \sqrt{ab(ab+4)}}{4}  = \mathcal L( [0; \overline{a,b}] ),
\end{align*}
we check that 
$x_{1/2} = \sqrt{ab}$.   
\end{enumerate}
\end{exam}

In \eqref{eq:ex Chr}, we can observe  the following factorization property:  
$w_{1/2}=w_{0/1}w_{1/1}$, $w_{1/3}=w_{0/1}w_{1/2}$ and $w_{2/5}=w_{1/3}w_{1/2}$. This property is general and we use it to get the following lemma.

\begin{lem}\label{lem:second}
Let $p/q$ and $p'/q'$ be rational numbers in $[0, 1]$ 
with $\det \begin{pmatrix} p & p' \\ q & q' \end{pmatrix} = \pm 1$ and $q \ge q'$. 
Then we have
\begin{equation}\label{eq2}
T( w_{(p+p')/(q+q')}) = T( w_{p/q} ) T( w_{p'/q'}) - (-1)^{q'} T(w_{(p-p')/(q-q')})
\end{equation}
and
\begin{equation}\label{ineq23}
T( w_{(p+p')/(q+q')}) \ge T( w_{p/q} ) +1.
\end{equation}
Here $w_{\pm 1/0}$ denotes the empty word and $T (w_{\pm 1/0}) = \Tr(X_{\pm 1/0}^0) = \Tr(I) = 2.$
\end{lem}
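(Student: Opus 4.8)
The plan is to derive both \eqref{eq2} and \eqref{ineq23} from the \emph{standard factorization} of Christoffel words together with Lemma~\ref{lem:first}. The hypothesis $\det \begin{pmatrix} p & p' \\ q & q' \end{pmatrix} = \pm 1$ says that $p/q$ and $p'/q'$ are Farey neighbours, so $(p+p')/(q+q')$ is their mediant and lies strictly between them; the factorization property already noted after \eqref{eq:ex Chr}, which holds in general (see \cite{BLRS09, Ai15}), asserts that $w_{(p+p')/(q+q')}$ is the concatenation of $w_{p/q}$ and $w_{p'/q'}$, in the order dictated by the ordering of the slopes. For a finite word $v = v_1 \cdots v_n$ I would write $\mathbf{M}(v)$ for the product $\begin{pmatrix} v_1 & 1 \\ 1 & 0 \end{pmatrix} \cdots \begin{pmatrix} v_n & 1 \\ 1 & 0 \end{pmatrix}$, so that $\mathbf{M}(uv) = \mathbf{M}(u)\mathbf{M}(v)$, $\Tr(\mathbf{M}(v)) = T(v)$ and $\det(\mathbf{M}(v)) = (-1)^n$. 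First I would dispose of the degenerate case $q = q'$: the determinant condition then forces $q = q' = 1$ and $\{p/q, p'/q'\} = \{0/1, 1/1\}$, so that $(p-p')/(q-q') = \pm 1/0$ is the empty word and both identities collapse to $T(ab) = ab + 2 = ab - (-1)^1 \cdot 2$, checked by hand. From now on I assume $q > q'$.

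For \eqref{eq2} I would use that $(p-p')/(q-q')$ and $p'/q'$ are themselves Farey neighbours whose mediant is $p/q$, so that they are exactly the Stern--Brocot parents of $p/q$. Applying the factorization twice then gives $\mathbf{M}(w_{(p+p')/(q+q')}) = \mathbf{M}(w_{p/q})\,\mathbf{M}(w_{p'/q'})$ and $\mathbf{M}(w_{p/q}) = \mathbf{M}(w_{(p-p')/(q-q')})\,\mathbf{M}(w_{p'/q'})$, up to transposing the two factors, and crucially with $\mathbf{M}(w_{p'/q'})$ on the same side in both (because $p'/q'$ lies on the same side of $p/q$ as of the mediant). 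Setting $U = \mathbf{M}(w_{p/q})$, $V = \mathbf{M}(w_{p'/q'})$, $W = \mathbf{M}(w_{(p-p')/(q-q')})$, the second factorization reads $U = WV$ or $U = VW$, so Lemma~\ref{lem:first} gives $\Tr(UV) = \Tr(U)\Tr(V) - \det(V)\Tr(W)$, while the first factorization and the invariance of the trace under transposing a product identify $\Tr(UV)$ with $T(w_{(p+p')/(q+q')})$. Since $\det(V) = (-1)^{q'}$, this is exactly \eqref{eq2}; the pleasant point is that Lemma~\ref{lem:first} covers both orders at once, so no tracking of left/right children is needed beyond knowing the factorizations exist.

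For \eqref{ineq23} only the mediant factorization $\mathbf{M}(w_{(p+p')/(q+q')}) = \mathbf{M}(w_{p/q})\,\mathbf{M}(w_{p'/q'})$ is needed. I would prove the elementary claim that for the matrix $A = \begin{pmatrix} \alpha_1 & \alpha_2 \\ \alpha_3 & \alpha_4 \end{pmatrix}$ of any nonempty word over the positive integers and any generator $E = \begin{pmatrix} v & 1 \\ 1 & 0 \end{pmatrix}$ with $v \ge 1$, one has $\Tr(AE) - \Tr(A) = \alpha_1(v-1) + \alpha_2 + (\alpha_3 - \alpha_4) \ge \alpha_2 \ge 1$, where $\alpha_2 \ge 1$ and $\alpha_3 \ge \alpha_4$ follow from the continuant recursion. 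Multiplying $\mathbf{M}(w_{p/q})$ on the right by the letters of $w_{p'/q'}$ one at a time, and applying the claim at each step (each intermediate product is again the matrix of a nonempty word), yields $T(w_{(p+p')/(q+q')}) = \Tr\big(\mathbf{M}(w_{p/q})\,\mathbf{M}(w_{p'/q'})\big) \ge T(w_{p/q}) + 1$.

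The hard part will be the clean invocation of the factorization rather than the algebra: one must confirm that $(p-p')/(q-q')$ and $p'/q'$ really are the parents of $p/q$ (so that $w_{p/q}$ genuinely factors \emph{through} $w_{p'/q'}$) and that the shared factor sits on a consistent side in the two factorizations, and one must separate out the degenerate case $q = q'$, where the parent $(p-p')/(q-q')$ becomes the empty word and the factorization of $p/q$ breaks down. Once these structural points are settled, \eqref{eq2} is a one-line consequence of Lemma~\ref{lem:first} and \eqref{ineq23} follows from the trace-monotonicity claim.
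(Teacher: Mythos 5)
Your proof is correct and takes essentially the same route as the paper's: both derive \eqref{eq2} from the standard factorization of Christoffel words combined with Lemma~\ref{lem:first} (with the trace's invariance under swapping factors absorbing the left/right ordering issue), and both dispose of the degenerate case $q=q'$ by direct computation. The only cosmetic difference is in \eqref{ineq23}, where the paper invokes the entry inequalities of the continuant matrices appearing in the factorization while you prove trace monotonicity letter by letter; these are equivalent elementary arguments.
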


\begin{proof}
It is known that every Christoffel word factors in two Christoffel words 
as follows (this is called the standard factorization): 
\begin{equation}\label{eq:standard factorization}
w_{(p+p')/(q+q')} = w_{p/q} w_{p'/q'}, \qquad w_{p/q} = w_{p'/q'} w_{(p-p')/(q-q')},
\end{equation}
see \cite[Theorem 2.4.1]{Re18} for the proof of the factorization; see also  
\cite[Chapter 3]{BLRS09} and \cite[Proof of Theorem 7.6]{Ai15}.
Therefore, if $q > q'$, then using Lemma~\ref{lem:first}, we obtain \eqref{eq2}.
Using the fact that for $a_1, \dots, a_k \in \{ a,b\}$ the matrix
$$\begin{pmatrix} a_1 & 1 \\ 1 & 0 \end{pmatrix} \begin{pmatrix} a_2 & 1 \\ 1 & 0 \end{pmatrix} \cdots \begin{pmatrix} a_k & 1 \\ 1 & 0 \end{pmatrix}
= \begin{pmatrix} a & b \\ c & d \end{pmatrix}$$
satisfies $a \ge b \ge d$ and $a \ge c \ge d$,  the inequality \eqref{ineq23} follows from \eqref{eq:standard factorization}.
%
%
If $q = q'$, then $p/q$ and $p'/q'$ are $0/1$ and $1/1$. It follows that
$$
T( w_{1/2} ) = T(ab) = ab+2 = T(a)T(b) + 2 = T( w_{1/1} ) T( w_{0/1}) + T(w_{1/0}).
$$
\end{proof}

\begin{exam}
The lower Christoffel words of slope $0/1$, $1/4$, $1/3$, $2/7$ are
$$w_{0/1} = a, \quad  w_{1/4} = aaab,  \quad w_{1/3} = aab, \quad w_{2/7} = aaabaab,$$
respectively.
Their corresponding traces are 
$$
T ( w_{0/1} )  = a, \quad T ( w_{1/4} )  = a^3 b + 2a^2 + 2ab + 2, \quad T ( w_{1/3} )  = a^2 b + 2a + b,
$$
$$
T ( w_{2/7} )  = a^5 b^2 + 4a^4 b + 3a^3b^2 + 4a^3 + 8 a^2 b + 2ab^2 + 5a + 2b.
$$
We check that we have 
$$
T ( w_{2/7} )  = T ( w_{1/4} ) T ( w_{1/3} ) + T ( w_{0/1} ),
$$
as given by the lemma. 
\end{exam}

\begin{lem}\label{lem:third}
Let $p/q$ and $p'/q'$ be rational numbers in $[0, 1]$ 
with $\det \begin{pmatrix} p & p' \\ q & q' \end{pmatrix} = \pm 1$ and $q > q'$.
Then we have the following four relations:
\begin{equation}\label{eqe}
\begin{split}
&\frac{\bfT_{q+q'}(x_{p/q})-\bfT_{q+q'}(x_{(p+p')/(q+q')})}{\bfT_{q'}(x_{p/q})-\bfT_{q'}(x_{p'/q'})}  \\
&\qquad \qquad \qquad = \bfT_{q}(x_{p/q}) + (-1)^{q'} \frac{\bfT_{q-q'}(x_{(p-p')/(q-q')})-\bfT_{q-q'}(x_{p/q})}{\bfT_{q'}(x_{p/q})-\bfT_{q'}(x_{p'/q'})},
\end{split}
\end{equation}
\begin{equation}\label{eqo}
\begin{split}
&\frac{\bfT_{q+q'}(x_{p'/q'})-\bfT_{q+q'}(x_{(p+p')/(q+q')})}{\bfT_{q}(x_{p'/q'})-\bfT_{q}(x_{p/q})} \\
& \qquad \qquad \qquad = \bfT_{q'}(x_{p'/q'}) - (-1)^{q'} \frac{\bfT_{q-q'}(x_{p'/q'})-\bfT_{q-q'}(x_{(p-p')/(q-q')})}{\bfT_{q}(x_{p'/q'})-\bfT_{q}(x_{p/q})}, 
\end{split}
\end{equation}
\begin{equation}\label{eqt}
\begin{split}
&\frac{\bfT_{q'}(x_{p'/q'}) - \bfT_{q'}(x_{(p+p')/(q+q')})}{\bfT_{q}(x_{(p+p')/(q+q')}) - \bfT_{q}(x_{p/q})} \\
& = \frac{\bfT_{q'}(x_{(p+p')/(q+q')})}{\bfT_{q}(x_{p/q})} - (-1)^{q'} \frac{\bfT_{q-q'}(x_{(p+p')/(q+q')})-\bfT_{q-q'}(x_{(p-p')/(q-q')})}{(\bfT_{q}(x_{(p+p')/(q+q')})-\bfT_{q}(x_{p/q})) \bfT_{q}(x_{p/q})}, 
\end{split}
\end{equation}
\begin{equation}\label{equ}
\begin{split}
&\frac{\bfT_{q}(x_{(p+p')/(q+q')})-\bfT_{q}(x_{p/q})}{\bfT_{q'}(x_{p'/q'})-\bfT_{q'}(x_{(p+p')/(q+q')})} \\
&= \frac{\bfT_{q}(x_{(p+p')/(q+q')})}{\bfT_{q'}(x_{p'/q'})} + (-1)^{q'} \frac{\bfT_{q-q'}(x_{(p+p')/(q+q')})-\bfT_{q-q'}(x_{(p-p')/(q-q')})}{(\bfT_{q'}(x_{p'/q'})-\bfT_{q'}(x_{(p+p')/(q+q')})) \bfT_{q'}(x_{p'/q'})}. 
\end{split}
\end{equation}
\end{lem}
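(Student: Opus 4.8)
The plan is to deduce all four relations from only two ingredients: the trace identity \eqref{eq1}, which in the polynomial notation reads $\bfT_{q+q'}(x) = \bfT_q(x)\bfT_{q'}(x) - (-1)^{q'}\bfT_{q-q'}(x)$ for every real $x$, and the Christoffel-word identity \eqref{eq2} of Lemma~\ref{lem:second}. The first step is to rewrite \eqref{eq2} in terms of the numbers $x_{p/q}$. Introduce the shorthand $x_1 = x_{p/q}$, $x_2 = x_{p'/q'}$, $x_3 = x_{(p+p')/(q+q')}$ and $x_4 = x_{(p-p')/(q-q')}$. Since $\bfT_q(x_1) = T(w_{p/q})$ by \eqref{eq:polyT}, and similarly $\bfT_{q'}(x_2) = T(w_{p'/q'})$, $\bfT_{q+q'}(x_3) = T(w_{(p+p')/(q+q')})$ and $\bfT_{q-q'}(x_4) = T(w_{(p-p')/(q-q')})$, the identity \eqref{eq2} becomes
\begin{equation*}
\bfT_{q+q'}(x_3) = \bfT_q(x_1)\,\bfT_{q'}(x_2) - (-1)^{q'}\,\bfT_{q-q'}(x_4). \tag{$\ast$}
\end{equation*}
I would keep $(\ast)$ fixed throughout; the four relations differ only in how it is combined with instances of \eqref{eq1}.

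For \eqref{eqe} and \eqref{eqo}, apply \eqref{eq1} at $x_1$ (resp.\ at $x_2$) to expand $\bfT_{q+q'}(x_1)$ (resp.\ $\bfT_{q+q'}(x_2)$) and subtract $(\ast)$. In the first case the two expansions share the factor $\bfT_q(x_1)$, so the subtraction collects into $\bfT_q(x_1)\big(\bfT_{q'}(x_1)-\bfT_{q'}(x_2)\big) - (-1)^{q'}\big(\bfT_{q-q'}(x_1)-\bfT_{q-q'}(x_4)\big)$; dividing by $\bfT_{q'}(x_1)-\bfT_{q'}(x_2)$ and rewriting the sign gives \eqref{eqe}. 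In the second case the shared factor is instead $\bfT_{q'}(x_2)$, and dividing by $\bfT_q(x_2)-\bfT_q(x_1)$ gives \eqref{eqo}. These two are the same computation, organized around two different common factors.

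For \eqref{eqt} and \eqref{equ}, apply \eqref{eq1} instead at the point $x_3$, obtaining $\bfT_{q+q'}(x_3) = \bfT_q(x_3)\bfT_{q'}(x_3) - (-1)^{q'}\bfT_{q-q'}(x_3)$, and equate this with $(\ast)$ to get
\[
\bfT_q(x_3)\bfT_{q'}(x_3) - \bfT_q(x_1)\bfT_{q'}(x_2) = (-1)^{q'}\big(\bfT_{q-q'}(x_3) - \bfT_{q-q'}(x_4)\big).
\]
The left-hand side can be split by adding and subtracting a mixed product in two ways. Writing it as $\bfT_{q'}(x_3)\big(\bfT_q(x_3)-\bfT_q(x_1)\big) + \bfT_q(x_1)\big(\bfT_{q'}(x_3)-\bfT_{q'}(x_2)\big)$ and solving for $\bfT_{q'}(x_3)-\bfT_{q'}(x_2)$ yields \eqref{eqt} after dividing by $\bfT_q(x_1)\big(\bfT_q(x_3)-\bfT_q(x_1)\big)$; writing it instead as $\bfT_q(x_3)\big(\bfT_{q'}(x_3)-\bfT_{q'}(x_2)\big) + \bfT_{q'}(x_2)\big(\bfT_q(x_3)-\bfT_q(x_1)\big)$ and solving for $\bfT_q(x_3)-\bfT_q(x_1)$ yields \eqref{equ}.

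The whole argument is therefore elementary algebra, and I do not expect any genuine obstacle; the only thing demanding care is bookkeeping, namely tracking at which of the four points \eqref{eq1} is invoked, and in particular remembering that \eqref{eqt} and \eqref{equ} require invoking it at $x_3$ rather than at $x_1$ or $x_2$. One should also keep in mind that the manipulations, like the statement itself, presuppose that the denominators appearing do not vanish; since $q > q'$ the polynomials $\bfT_n$ are strictly increasing on the positive reals (as noted just before Proposition~\ref{prop:phi}), so each displayed division is legitimate once the relevant values among $x_1, x_2, x_3$ are seen to be distinct, which is guaranteed in the context where Lemma~\ref{lem:third} is applied.
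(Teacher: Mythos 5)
Your proposal is correct and follows essentially the same route as the paper: you set up the identity $\bfT_{q+q'}(x_{(p+p')/(q+q')}) = \bfT_{q}(x_{p/q})\bfT_{q'}(x_{p'/q'}) - (-1)^{q'}\bfT_{q-q'}(x_{(p-p')/(q-q')})$ from Lemma~\ref{lem:second} (the paper's \eqref{eq3}), then apply \eqref{eq1} at the points $x_{p/q}$, $x_{p'/q'}$, $x_{(p+p')/(q+q')}$ and combine, which is exactly the paper's proof with the algebra written out explicitly (in particular, your intermediate identity for \eqref{eqt} and \eqref{equ} is precisely the paper's displayed multline equation).
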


\begin{proof}
By \eqref{eq2}, we have
\begin{equation}\label{eq3}
\bfT_{q+q'}(x_{(p+p')/(q+q')}) = \bfT_{q}(x_{p/q}) \bfT_{q'}(x_{p'/q'}) - (-1)^{q'} \bfT_{q-q'}(x_{(p-p')/(q-q')}).
\end{equation}
By applying \eqref{eq1} with $X = X_{p/q}$, $X= X_{p'/q'}$, and $X = X_{(p+p')/(q+q')}$, we get 
\begin{align*}
\bfT_{q+q'}(x_{p/q}) &= \bfT_{q}(x_{p/q}) \bfT_{q'}(x_{p/q}) - (-1)^{q'} \bfT_{q-q'}(x_{p/q}),\\
\bfT_{q+q'}(x_{p'/q'}) &= \bfT_{q}(x_{p'/q'}) \bfT_{q'}(x_{p'/q'}) - (-1)^{q'} \bfT_{q-q'}(x_{p'/q'}), \\
\bfT_{q+q'}(x_{(p+p')/(q+q')}) &= \bfT_{q}(x_{(p+p')/(q+q')}) \bfT_{q'}(x_{(p+p')/(q+q')}) - (-1)^{q'} \bfT_{q-q'}(x_{(p+p')/(q+q')}). 
\end{align*}
By combining these three equalities with \eqref{eq3}, we derive \eqref{eqe}, \eqref{eqo} and 
\begin{multline*}
\bfT_{q}(x_{(p+p')/(q+q')})  \bfT_{q'}(x_{(p+p')/(q+q')})
-\bfT_{q}(x_{p/q}) \bfT_{q'}(x_{p'/q'}) \\
=  (-1)^{q'} \left( \bfT_{q-q'}(x_{(p+p')/(q+q')})-\bfT_{q-q'}(x_{(p-p')/(q-q')}) \right),
\end{multline*}
from which \eqref{eqt} and \eqref{equ} follow.
\end{proof}

\begin{prop}\label{prop:monotone}
Let $p/q$ and $p'/q'$ be rational numbers in $[0, 1]$ 
with $\det \begin{pmatrix} p & p' \\ q & q' \end{pmatrix} = \pm 1$ and $q > q'$.
Then we have 
$$
x_{p/q} < x_{(p+p')/(q+q')} < x_{p'/q'} \ \text{ or } \ 
x_{p'/q'} < x_{(p+p')/(q+q')} < x_{p/q}. 
$$
\end{prop}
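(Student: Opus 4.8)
The plan is to prove the proposition by induction on $q+q'$, exploiting the standard factorization \eqref{eq:standard factorization}, which realizes $p/q$ itself as the mediant of the Farey neighbours $p'/q'$ and $(p-p')/(q-q')$. Throughout I use that each $\bfT_n$ is strictly increasing on $(0,\infty)$, so that for the three pairwise distinct reals $x_{p/q}$, $x_{p'/q'}$ and $x_{(p+p')/(q+q')}$ the comparison of any two is equivalent to the comparison of their images under a common $\bfT_n$. Since among three distinct reals exactly one lies strictly between the other two, the conclusion is equivalent to the two assertions that $x_{p/q}$ is not the middle one and that $x_{p'/q'}$ is not the middle one.

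I would first translate these two assertions into sign conditions. Saying that $x_{p/q}$ is not strictly between $x_{p'/q'}$ and $x_{(p+p')/(q+q')}$ means that $x_{p/q}-x_{p'/q'}$ and $x_{p/q}-x_{(p+p')/(q+q')}$ share the same sign, which by monotonicity of $\bfT_{q'}$ and $\bfT_{q+q'}$ is exactly positivity of the left-hand side of \eqref{eqe}; likewise $x_{p'/q'}$ not being the middle one is positivity of the left-hand side of \eqref{eqo}. Hence it suffices to show that the right-hand sides of \eqref{eqe} and \eqref{eqo} are both positive. By the induction hypothesis applied to the pair $(p'/q',(p-p')/(q-q'))$ produced by \eqref{eq:standard factorization} (whose denominators sum to $q<q+q'$), the value $x_{p/q}$ lies strictly between $x_{p'/q'}$ and $x_{(p-p')/(q-q')}$; this forces the two quotients occurring in the right-hand sides of \eqref{eqe} and \eqref{eqo} to be positive, since in each the numerator and the denominator change sign together.

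Consequently, when $(-1)^{q'}=1$ the right-hand side of \eqref{eqe} is a sum of positive terms, and when $(-1)^{q'}=-1$ the right-hand side of \eqref{eqo} is a sum of positive terms; so in each parity one of the two required inequalities is immediate. The remaining inequality — positivity of $\bfT_q(x_{p/q})$ minus that quotient in \eqref{eqe} when $q'$ is odd, and of $\bfT_{q'}(x_{p'/q'})$ minus that quotient in \eqref{eqo} when $q'$ is even — is the main obstacle, being a difference of two positive quantities. To resolve it I would expand the leading term via \eqref{eq2}: applying that identity to the factorization of $w_{p/q}$ shows $\bfT_q(x_{p/q})=T(w_{p/q})$ equals $\bfT_{q'}(x_{p'/q'})\,\bfT_{q-q'}(x_{(p-p')/(q-q')})$ up to the explicit lower-order correction, so the subtracted gap-quotient is outweighed by a term carrying an extra continuant factor ($\bfT_{q'}$ or $\bfT_{q-q'}$, hence at least $2$). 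The relations \eqref{eqt} and \eqref{equ} are the natural rearrangements in which this domination is visible, and the gap-estimate \eqref{gap_ineq} together with the recurrence \eqref{eq1} provides the bound on the subtracted ratio that makes it rigorous.

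Finally, the induction is anchored at $q=2,\ q'=1$, where the factorization bottoms out at $w_{1/2}=w_{0/1}w_{1/1}$ and the betweenness needed from the ``hypothesis'' is just $a<\sqrt{ab}<b$, i.e. $x_{0/1}<x_{1/2}<x_{1/1}$, which is Example~\ref{ex:prop:phi}. Strictness propagates automatically: at each step the quotients above are strictly positive and the $\bfT_n$-values are strictly positive, so the sign coincidences are strict, whence $x_{(p+p')/(q+q')}$ is distinct from both $x_{p/q}$ and $x_{p'/q'}$ and lies strictly between them (the distinctness of the two parents being supplied by the inductive strict betweenness, and at the base by $a<b$). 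I expect the quantitative domination in the previous paragraph to be the genuinely delicate point; the rest is sign bookkeeping driven by the induction hypothesis.
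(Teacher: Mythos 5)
Your skeleton does match the paper's proof: the same induction along the Farey/Stern--Brocot structure via the standard factorization \eqref{eq:standard factorization}, the same reduction of the conclusion to strict positivity of the left-hand sides of \eqref{eqe} and \eqref{eqo} (the paper's \eqref{direction1} and \eqref{direction2}), and the same observation that the induction hypothesis makes the correction quotients in \eqref{eqe} and \eqref{eqo} positive, so that for each parity of $q'$ one of the two required inequalities is immediate. The gap is that the place where you stop --- ``the subtracted gap-quotient is outweighed by a term carrying an extra continuant factor (hence at least $2$)'' --- is exactly where the proof actually lives, and your one-sentence plan is not an argument. The quotient in \eqref{eqe}, namely $\bigl(\bfT_{q-q'}(x_{(p-p')/(q-q')})-\bfT_{q-q'}(x_{p/q})\bigr)/\bigl(\bfT_{q'}(x_{p/q})-\bfT_{q'}(x_{p'/q'})\bigr)$, is a ratio of gaps of different orders taken at different pairs of points; nothing generic prevents it from being huge when the denominator gap is small, and no factor-of-$2$ consideration addresses this. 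The paper's control of it when $q'$ is odd requires splitting into the cases $q-q'=q'$, $q-q'>q'$, $q-q'<q'$ (its (II-a)--(II-c)), invoking the induction hypothesis two and three levels deeper in the tree (betweenness statements for $x_{(p-2p')/(q-2q')}$, $x_{(2p'-p)/(2q'-q)}$, $x_{(3p'-2p)/(3q'-2q)}$, $x_{(2p-3p')/(2q-3q')}$), the auxiliary inequalities \eqref{ineq23}, \eqref{ineq3}, \eqref{ineq4}, \eqref{ineq5}, \eqref{ineq2iii}, and a separate proof that $\bfT_{q-q'}(x_{p/q})+\bfT_{q'}(x_{p'/q'})<\bfT_{q}(x_{p/q})$ (itself split into $q-q'\ge 3$ and $q-q'=1$). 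That is roughly a page of estimates which your plan compresses into an expectation.

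Moreover, for the other hard case ($q'$ even, i.e.\ inequality \eqref{direction2}), the paper's argument is not a one-shot domination at all, and your sketch does not even hint at its shape: one takes $m$ with $mq'<q<(m+1)q'$, forms the chain of ratios $A_0,\dots,A_m$ along the full Euclidean descent, derives from \eqref{eqo} the recursion $A_n=\bfT_{q'}(x_{p'/q'})-1/A_{n+1}$, bounds $A_m<\bfT_{q'}(x_{p'/q'})$ using \eqref{eqe}, and unwinds a finite continued fraction to conclude $A_0>0$. A single application of \eqref{eqt} or \eqref{equ} together with \eqref{gap_ineq}, which is all you invoke, cannot replace this iteration. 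So: right strategy, correct and complete on the two easy parity cases and on the sign bookkeeping, but the two hard cases --- the technical core of the proposition --- are asserted rather than proved.
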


\begin{proof}
It is easy to check that, for every $n \ge 1$ the polynomial function $ \bfT_n ( x)$ is increasing.
Therefore, for every rational numbers $r/s,~ u/v$ in $[0, 1]$ and every $n \ge 1$, we have 
$$
\bfT_n ( x_{r/s} ) < \bfT_n ( x_{u/v} ) 
 \ \text{ if and only if } \ x_{r/s} < x_{u/v}.$$ 
We argue by induction. 
Under the assumption that $x_{p/q}$ lies between $x_{p'/q'}$ and $x_{(p-p')/(q-q')}$, we will show that $x_{(p+p')/(q+q')}$ lies between $x_{p/q}$ and $x_{p'/q'}$.
It will be achieved by showing that
\begin{equation}\label{direction1}
\frac{\bfT_{q+q'}( x_{p/q}) -\bfT_{q+q'}( x_{(p+p')/(q+q')}) }{ \bfT_{q'}(x_{p/q}) -\bfT_{q'}( x_{{p'}/{q'}}) } >0
\end{equation}
and
\begin{equation}\label{direction2}
\frac{\bfT_{q+q'}(x_{p'/q'})-\bfT_{q+q'}(x_{(p+p')/(q+q')})}{\bfT_{q}(x_{p'/q'})-\bfT_{q}(x_{p/q})} >0.
\end{equation}
We first show  \eqref{direction1} through (I) and (II-a) to (II-c).

(I) Suppose that $q'$ is even.
Since $x_{p/q}$ is between $x_{p'/q'}$ and $x_{(p-p')/(q-q')}$, we have
$$
\frac{\bfT_{q-q'}(x_{p/q})-\bfT_{q-q'}(x_{(p-p')/(q-q')})}{\bfT_{q'}(x_{p/q})-\bfT_{q'}(x_{p'/q'})} < 0.
$$
By \eqref{eqe},
\begin{equation}\label{ineq1}
\frac{\bfT_{q+q'}(x_{p/q})-\bfT_{q+q'}(x_{(p+p')/(q+q')})}{\bfT_{q'}(x_{p/q})-\bfT_{q'}(x_{p'/q'})} > \bfT_{q}(x_{p/q}) >0.
\end{equation}

Next, we assume that $q'$ is odd from (II-a) to (II-c).

(II-a) If  $q-q' = q'$, then $p/q = 1/2$ and $p'/q' = 0/1$ or $1/1$.
For $p'/q' = 0/1$, we have 
\begin{equation}\label{ineq01}
\begin{split}
&\frac{\bfT_{q-q'}(x_{p/q})-\bfT_{q-q'}(x_{(p-p')/(q-q')})}{\bfT_{q'}(x_{p'/q'})-\bfT_{q'}(x_{p/q})}  \\
& =\frac{\bfT_{1}(x_{1/2})-\bfT_{1}(x_{1/1})}{\bfT_{1}(x_{0/1})-\bfT_{1}(x_{1/2})} = \frac{\sqrt{ab}-b}{a-\sqrt{ab}}=\sqrt{\frac ba} < ab+2 = \bfT_2(x_{1/2}).
\end{split}
\end{equation}
For $p'/q' = 1/1$ we have 
\begin{equation}\label{ineq11}
\begin{split}
&\frac{\bfT_{q-q'}(x_{p/q})-\bfT_{q-q'}(x_{(p-p')/(q-q')})}{\bfT_{q'}(x_{p'/q'})-\bfT_{q'}(x_{p/q})}  \\
& =\frac{\bfT_{1}(x_{1/2})-\bfT_{1}(x_{0/1})}{\bfT_{1}(x_{1/1})-\bfT_{1}(x_{1/2})}= \frac{\sqrt{ab}-a}{b-\sqrt{ab}}=\sqrt{\frac ab} <  ab+2 =  \bfT_2(x_{1/2}).
\end{split}
\end{equation}

(II-b) If $q-q' > q'$,  then  $x_{(p-p')/(q-q')}$ lies between $x_{p'/q'}$ and $x_{(p-2p')/(q-2q')}$, thus,
$$
 \frac{ \bfT_{q-2q'}(x_{p/q}) - \bfT_{q-2q'}(x_{(p-2p')/(q-2q')}) }{\bfT_{q'}(x_{p'/q'})-\bfT_{q'}(x_{p/q})} >0.
$$
Thus by \eqref{equ},
\begin{equation}\label{ineq2iv}
\begin{split}
& \frac{\bfT_{q-q'}(x_{p/q})-\bfT_{q-q'}(x_{(p-p')/(q-q')})}{\bfT_{q'}(x_{p'/q'})-\bfT_{q'}(x_{p/q})} \\
& \qquad\qquad\qquad =  \frac{\bfT_{q-q'}(x_{p/q})}{\bfT_{q'}(x_{p'/q'})} - \frac{ \bfT_{q-2q'}(x_{p/q}) - \bfT_{q-2q'}(x_{(p-2p')/(q-2q')}) }{(\bfT_{q'}(x_{p'/q'})-\bfT_{q'}(x_{p/q})) \bfT_{q'}(x_{p'/q'})}\\
& \qquad\qquad\qquad < \frac{\bfT_{q-q'}(x_{p/q})}{\bfT_{q'}(x_{p'/q'})}< \bfT_{q-q'}(x_{p/q}) < \bfT_{q}(x_{p/q}).
 \end{split}
\end{equation}

(II-c) If  $q-q' < q'$, then $x_{p'/q'}$ lies between $x_{(p-p')/(q-q')}$ and $x_{(2p'-p)/(2q'-q)}$, thus,
\begin{equation}\label{caseiic-ineq}
\frac{\bfT_{2q'-q}(x_{(2p'-p)/(2q'-q)})-\bfT_{2q'-q}(x_{p'/q'})}{\bfT_{q-q'}(x_{p'/q'})-\bfT_{q-q'}(x_{(p-p')/(q-q')})} >0
\end{equation}
and
\begin{equation}\label{caseiic-ineq2}
\frac{\bfT_{2q'-q}(x_{p/q})-\bfT_{2q'-q}(x_{(2p'-p)/(2q'-q)})}{\bfT_{q'}(x_{p/q})-\bfT_{q'}(x_{p'/q'}) } >0 .
\end{equation}
For odd $q$,  we have,  by \eqref{eqt} and \eqref{caseiic-ineq2}, 
\begin{equation}\label{ineq2ii}
\begin{split}
& \frac{\bfT_{q-q'}(x_{(p-p')/(q-q')})-\bfT_{q-q'}(x_{p/q})}{\bfT_{q'}(x_{p/q})-\bfT_{q'}(x_{p'/q'})} \\
&\qquad\qquad\qquad  = \frac{\bfT_{q-q'}(x_{p/q})}{\bfT_{q'}(x_{p'/q'})} - \frac{\bfT_{2q'-q}(x_{p/q})-\bfT_{2q'-q}(x_{(2p'-p)/(2q'-q)})}{(\bfT_{q'}(x_{p/q})-\bfT_{q'}(x_{p'/q'})) \bfT_{q'}(x_{p'/q'})} \\
 &\qquad\qquad\qquad  < \frac{\bfT_{q-q'}(x_{p/q})}{\bfT_{q'}(x_{p'/q'})} < \bfT_{q-q'}(x_{p/q})  < \bfT_{q}(x_{p/q}).
 \end{split}
\end{equation}

Now we assume that $q$ is even.  
If $q-q' < 2q' - q$,  then $x_{(2p'-p)/(2q'-q)}$  
lies between $x_{(p-p')/(q-q')}$ and $x_{(3p'-2p)/(3q'-2q)}$.
By \eqref{eqt}
\begin{equation*}
\begin{split}
&\frac{\bfT_{q-q'}(x_{(p-p')/(q-q')}) - \bfT_{q-q'}(x_{p'/q'})}{\bfT_{2q'-q}(x_{p'/q'}) - \bfT_{2q'-q}(x_{(2p'-p)/(2q'-q)})} -\frac{\bfT_{q-q'}(x_{p'/q'})}{\bfT_{2q'-q}(x_{(2p'-p)/(2q'-q)})}  \\
& = \frac{\bfT_{3q'-2q}(x_{p'/q'})-\bfT_{3q'-2q}(x_{(3p'-2p)/(3q'-2q)})}{(\bfT_{2q'-q}(x_{p'/q'})-\bfT_{2q'-q}(x_{(2p'-p)/(2q'-q)})) \bfT_{2q'-q}(x_{(2p'-p)/(2q'-q)})} >0.
\end{split}
\end{equation*}
If $q-q' > 2q' - q$, then $x_{(p-p')/(q-q')}$ 
lies between $x_{(2p'-p)/(2q'-q)}$ and $x_{(2p-3p')/(2q-3q')}$.
By \eqref{equ}
\begin{equation*}
\begin{split}
&\frac{\bfT_{q-q'}(x_{p'/q'})-\bfT_{q-q'}(x_{(p-p')/(q-q')})}{\bfT_{2q'-q}(x_{(2p'-p)/(2q'-q)})-\bfT_{2q'-q}(x_{p'/q'})} - \frac{\bfT_{q-q'}(x_{p'/q'})}{\bfT_{2q'-q}(x_{(2p'-p)/(2q'-q)})}\\
&= \frac{\bfT_{2q-3q'}(x_{p'/q'})-\bfT_{2q-3q'}(x_{(2p-3p')/(2q-3q')})}{(\bfT_{2q'-q}(x_{(2p'-p)/(2q'-q)})-\bfT_{2q'-q}(x_{p'/q'})) \bfT_{2q'-q}(x_{(2p'-p)/(2q'-q)})} >0.
\end{split}
\end{equation*}
Since $q-q'$ is odd and $2q'-q$ is even, we have $q-q' \ne 2q'-q$. Therefore, for the both cases of $q-q'<2q'-q$ and $q-q'>2q'-q$, we conclude that 
\begin{equation}\label{ineq3}
\frac{\bfT_{q-q'}(x_{(p-p')/(q-q')}) - \bfT_{q-q'}(x_{p'/q'})}{\bfT_{2q'-q}(x_{p'/q'}) - \bfT_{2q'-q}(x_{(2p'-p)/(2q'-q)})} > \frac{\bfT_{q-q'}(x_{p'/q'})}{\bfT_{2q'-q}(x_{(2p'-p)/(2q'-q)})}.
\end{equation}

By \eqref{eqe}, we have
\begin{multline*}
\frac{\bfT_{q}(x_{p'/q'})-\bfT_{q}(x_{p/q})}{\bfT_{q-q'}(x_{p'/q'})-\bfT_{q-q'}(x_{(p-p')/(q-q')})}  \\
= \bfT_{q'}(x_{p'/q'}) - \frac{\bfT_{2q'-q}(x_{(2p'-p)/(2q'-q)})-\bfT_{2q'-q}(x_{p'/q'})}{\bfT_{q-q'}(x_{p'/q'})-\bfT_{q-q'}(x_{(p-p')/(q-q')})},
\end{multline*}
which is equivalent to
\begin{multline}\label{eqep}
 \bfT_{q-q'}(x_{(p-p')/(q-q')}) - \bfT_{q-q'}(x_{p'/q'}) \\
= \frac{\bfT_{q}(x_{p/q}) - \bfT_{q}(x_{p'/q'}) + \bfT_{2q'-q}(x_{p'/q'}) - \bfT_{2q'-q}(x_{(2p'-p)/(2q'-q)})}{\bfT_{q'}(x_{p'/q'})}.
\end{multline}
By plugging \eqref{eqep} into \eqref{ineq3}, we deduce that
\begin{equation}\label{ineq4}
\begin{split}
\frac{\bfT_{q}(x_{p'/q'})-\bfT_{q}(x_{p/q})}{\bfT_{2q'-q}(x_{(2p'-p)/(2q'-q)})-\bfT_{2q'-q}(x_{p'/q'})} 
&> \frac{ \bfT_{q'}(x_{p'/q'}) \bfT_{q-q'}(x_{p'/q'})}{\bfT_{2q'-q}(x_{(2p'-p)/(2q'-q)})}  -1\\
&\ge \frac{ \bfT_{q'}(x_{p'/q'}) \bfT_{q-q'}(x_{p'/q'})}{\bfT_{q'}(x_{p'/q'}) -1 }  -1 \\
&= \frac{ \bfT_{q'}(x_{p'/q'})(\bfT_{q-q'}(x_{p'/q'})-1)+1}{\bfT_{q'}(x_{p'/q'}) -1 } \\
&\ge \frac{1}{\bfT_{q'}(x_{p'/q'}) -1 }, 
\end{split}
\end{equation}
where the second inequality follows from \eqref{ineq23}.    
Therefore, by \eqref{gap_ineq} and \eqref{ineq4}
\begin{equation}\label{ineq5}
\begin{split}
&\frac{\bfT_{2q'-q}(x_{p/q})-\bfT_{2q'-q}(x_{(2p'-p)/(2q'-q)})}{\bfT_{q'}(x_{p/q})-\bfT_{q'}(x_{p'/q'}) } \\
&= \frac{\bfT_{2q'-q}(x_{p/q})-\bfT_{2q'-q}(x_{p'/q'})}{\bfT_{q'}(x_{p/q})-\bfT_{q'}(x_{p'/q'}) } \\
&\qquad + \frac{\bfT_{q}(x_{p/q})-\bfT_{q}(x_{p'/q'}) }{\bfT_{q'}(x_{p/q})-\bfT_{q'}(x_{p'/q'}) }  \frac{\bfT_{2q'-q}(x_{p'/q'})-\bfT_{2q'-q}(x_{(2p'-p)/(2q'-q)})}{\bfT_{q}(x_{p/q})-\bfT_{q}(x_{p'/q'}) } \\
&< 1 +\left( \bfT_{q-q'}(x_{p/q}) + \bfT_{q'}(x_{p'/q'}) +1 \right)  (\bfT_{q'}(x_{p'/q'}) -1) \\
&= \bfT_{q-q'}(x_{p/q}) \left( \bfT_{q'}(x_{p'/q'}) -1 \right) + \bfT_{q'}(x_{p'/q'}) ^2.
\end{split}
\end{equation}
Using \eqref{eqt} and \eqref{ineq5}, we have
\begin{equation}\label{ineq2iii}
\begin{split}
& \frac{\bfT_{q-q'}(x_{p/q})-\bfT_{q-q'}(x_{(p-p')/(q-q')})}{\bfT_{q'}(x_{p'/q'})-\bfT_{q'}(x_{p/q})} \\
& = \frac{\bfT_{q-q'}(x_{p/q})}{\bfT_{q'}(x_{p'/q'})} + \frac{\bfT_{2q'-q}(x_{p/q})-\bfT_{2q'-q}(x_{(2p'-p)/(2q'-q)})}{(\bfT_{q'}(x_{p/q})-\bfT_{q'}(x_{p'/q'})) \bfT_{q'}(x_{p'/q'})} \\ 
&< \frac{\bfT_{q-q'}(x_{p/q})}{\bfT_{q'}(x_{p'/q'})}  
+ \frac{\bfT_{q-q'}(x_{p/q}) \left( \bfT_{q'}(x_{p'/q'}) -1 \right) + \bfT_{q'}(x_{p'/q'}) ^2 } { \bfT_{q'}(x_{p'/q'}) } \\
&= \bfT_{q-q'}(x_{p/q})  + \bfT_{q'}(x_{p'/q'}) .
\end{split}
\end{equation}
To show $\bfT_{q-q'}(x_{p/q})+\bfT_{q'}(x_{p'/q'})<\bfT_q(x_{p/q})$, we will consider the following two cases.
If $q-q' \ge 3$, then  $\bfT_{q-q'}(x) \ge 4$ for all $x \ge 1$.
Thus, from \eqref{eq1} and \eqref{eq2}, we have
\begin{equation}
\begin{split}
\bfT_{q}(x_{p/q}) &= \frac 12 \left(  \bfT_{q'}(x_{p/q}) \bfT_{q-q'}(x_{p/q}) + \bfT_{2q'-q}(x_{p/q}) \right) \\
&\qquad + \frac 12 \left( \bfT_{q'}(x_{p'/q'}) \bfT_{q-q'}(x_{(p-p')/(q-q')}) + \bfT_{2q'-q}(x_{(2p'-p)/(2q'-q)}) \right) \\
&>  \bfT_{q'}(x_{p/q}) + \bfT_{q'}(x_{p'/q'}) > \bfT_{q-q'}(x_{p/q}) + \bfT_{q'}(x_{p'/q'}).
\end{split}
\end{equation}
If $q-q' =1$, then since $2q'-q \ge 2$, we have 
$$
\bfT_{2q'-q}(x_{(2p'-p)/(2q'-q)}) \ge b = \bfT_1(x_{1/1}).
$$ 
Thus, from \eqref{eq2}, we have
\begin{equation}
\begin{split}
\bfT_{q}(x_{p/q}) &= \bfT_{q'}(x_{p'/q'}) \bfT_{q-q'}(x_{(p-p')/(q-q')}) + \bfT_{2q'-q}(x_{(2p'-p)/(2q'-q)}) \\
&\ge  \bfT_{q'}(x_{p'/q'}) +\bfT_{1}(x_{1/1}) \ge  \bfT_{q'}(x_{p'/q'}) + \bfT_{q-q'}(x_{p/q}).
\end{split}
\end{equation}
Therefore, using \eqref{ineq01}, \eqref{ineq11}, \eqref{ineq2iv}, 
\eqref{ineq2ii}, \eqref{ineq2iii}, 
we deduce from \eqref{eqe} that 
\begin{equation*}\label{ineq2}
\frac{\bfT_{q+q'}(x_{p/q})-\bfT_{q+q'}(x_{(p+p')/(q+q')})}{\bfT_{q'}(x_{p/q})-\bfT_{q'}(x_{p'/q'})} >0.
\end{equation*}
Thus, we have established \eqref{direction1}
regardless of the parity of $q'$.

We next show \eqref{direction2} through (III-a) to (III-b).

(III-a) Suppose that $q'$ is odd.
Since $x_{p/q}$ is between $x_{p'/q'}$ and $x_{(p-p')/(q-q')}$, we have
$$
\frac{\bfT_{q-q'}(x_{p'/q'})-\bfT_{q-q'}(x_{(p-p')/(q-q')})}{\bfT_{q}(x_{p'/q'})-\bfT_{q}(x_{p/q})} > 0.
$$
By \eqref{eqo}, 
\begin{equation*}
\frac{\bfT_{q+q'}(x_{p'/q'})-\bfT_{q+q'}(x_{(p+p')/(q+q')})}{\bfT_{q}(x_{p'/q'})-\bfT_{q}(x_{p/q})} > \bfT_{q'}(x_{p'/q'})>0.
\end{equation*}

(III-b) Suppose that $q'$ is even.
Let $m \ge 1$ be the integer satisfying that $m q' < q < (m+1) q'$.
Then $x_{p'/q'}$ is between $x_{((m+1)p'-p)/((m+1)q'-q)}$ and $x_{(p-mp')/(q-mq')}$.
For each $n = 0,1,\dots m$, set
$$A_n =  \frac{\bfT_{q-(n-1)q'}(x_{p'/q'})-\bfT_{q-(n-1)q'}(x_{(p-(n-1)p')/(q-(n-1)q')})}{\bfT_{q-nq'}(x_{p'/q'})-\bfT_{q-nq'}(x_{(p-n p')/(q-nq')})}$$
and we have $A_n>0$ for $1\le n\le m$.
By \eqref{eqe} we have (note that $q$ is odd)
\begin{align*}
A_m &= \bfT_{q'}(x_{p'/q'}) + \frac{ \bfT_{(m+1)q'-q}(x_{p'/q'}) -\bfT_{(m+1)q'-q}(x_{((m+1)p'-p)/((m+1)q'-q)})}{\bfT_{q-mq'}(x_{p'/q'})-\bfT_{q-mq'}(x_{(p-mp')/(q-mq')})}  \\
 &< \bfT_{q'}(x_{p'/q'}).
\end{align*}
Here, we have used the fact that $p'/q'$ is between $\frac{(m+1)p'-p}{(m+1)q'-q}$ and $\frac{p-mp'}{q-mq'}$.
By \eqref{eqo}, for $n=0, \ldots , m-1$, we have 
$$ A_n = \bfT_{q'}(x_{p'/q'}) - \frac{1}{A_{n+1}}$$ 
Therefore, inductively, we have 
\begin{equation*}\label{eqinq6}
\begin{split}
A_0 
> \bfT_{q'}(x_{p'/q'}) - \cfrac{1}{\bfT_{q'}(x_{p'/q'}) - \cfrac{1}{\ddots - \cfrac{1}{ \bfT_{q'}(x_{p'/q'}) } } } > 0.
\end{split}\end{equation*}
Since 
$$A_0 = \frac{\bfT_{q+q'}(x_{p'/q'})-\bfT_{q+q'}(x_{(p+p')/(q+q')})}{\bfT_{q}(x_{p'/q'})-\bfT_{q}(x_{p/q})}, $$
we have established \eqref{direction2}
regardless of the parity of $q'$.  We conclude that
$$
x_{p'/q'} < x_{(p+p')/(q+q')} < x_{p/q}   \ \text { or } \ x_{p/q} < x_{(p+p')/(q+q')} < x_{p'/q'}  
$$   
always holds.  
\end{proof}

The Farey sequence $\{ \mathcal F_N \}_{N \in \mathbb N}$
is the sequence of the set of rational numbers in $[0,1]$ whose denominator is less than or equal to $N$.
For instance, $\mathcal F_4$ is the set $\{ \frac 01, \frac 14, \frac 13,  \frac 12,  \frac 23, \frac 34, \frac 11\}$.
We say that two fractions $p/q$ and $p'/q'$ in $\mathcal F_N$ are neighbor if there is no element of $\mathcal F_N$ between $p/q$ and $p'/q'$.
It is well known that for any neighboring fractions $p/q$ and $p'/q'$ in $\mathcal F_N$ we have $| pq'-qp'| = 1$.

\begin{prop}\label{Prop:monotonicity}
Let $p/q$ and $p'/q'$ be rational numbers in $[0, 1]$ with $p/q < p'/q'$.
Then
$$
\mathcal L ( \alpha_{p/q} )  < \mathcal L ( \alpha_{p'/q'} ).
$$
\end{prop}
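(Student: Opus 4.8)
The plan is to reduce the entire statement to the single quantity $x_{p/q}$ introduced in \eqref{eq:polyT}. Applying Proposition~\ref{prop:phi} to the purely periodic continued fraction $\alpha_{p/q}=[0;\overline{w_{p/q}}]$, whose period has length $q$ and for which $T(w_{p/q})=\bfT_q(x_{p/q})$ by the definition of $x_{p/q}$, we obtain
$$
\cL(\alpha_{p/q}) = \log\frac{x_{p/q}+\sqrt{x_{p/q}^2+4}}{2}.
$$
Since the map $x\mapsto \log\frac{x+\sqrt{x^2+4}}{2}$ is strictly increasing on $(0,\infty)$, the proposition is equivalent to the arithmetic statement that $p/q < p'/q'$ implies $x_{p/q} < x_{p'/q'}$, and it is this statement that I would prove.

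First I would reduce to neighbouring fractions. Fix an integer $N\ge\max(q,q')$, so that both $p/q$ and $p'/q'$ lie in $\mathcal F_N$; listing the fractions of $\mathcal F_N$ lying between them as a chain of consecutive neighbours $p/q=r_0/s_0 < r_1/s_1 < \dots < r_m/s_m = p'/q'$, it suffices, by transitivity, to show that $x_{r_i/s_i} < x_{r_{i+1}/s_{i+1}}$ for each $i$. In other words, it is enough to prove that $x$ is strictly increasing along neighbouring fractions of every Farey sequence $\mathcal F_N$, and I would establish this by induction on $N$.

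For $N=1$ the only neighbours are $0/1 < 1/1$, and $x_{0/1}=a < b = x_{1/1}$ by Example~\ref{ex:prop:phi}. Passing to $N=2$ inserts the mediant $1/2$, and $x_{1/2}=\sqrt{ab}$ satisfies $a<\sqrt{ab}<b$ because $a<b$, so monotonicity is preserved; this disposes of the unique pair of neighbours with equal denominators, which is excluded from the hypothesis $q>q'$ of Proposition~\ref{prop:monotone}. For the inductive step, $\mathcal F_{N+1}$ arises from $\mathcal F_N$ by inserting, between each pair of neighbours $p/q<p'/q'$ with $q+q'=N+1$, their mediant $(p+p')/(q+q')$. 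Neighbour pairs already present in $\mathcal F_N$ remain increasing by the induction hypothesis. For a newly split pair, the induction hypothesis gives $x_{p/q}<x_{p'/q'}$, while Proposition~\ref{prop:monotone} (applied with the larger of $q$, $q'$ playing the role of the first index) asserts that $x_{(p+p')/(q+q')}$ lies strictly between $x_{p/q}$ and $x_{p'/q'}$; the two facts together force
$$
x_{p/q} < x_{(p+p')/(q+q')} < x_{p'/q'},
$$
so both new neighbour pairs are again increasing. This completes the induction and hence the proof.

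Essentially all the analytic content has already been isolated in Proposition~\ref{prop:monotone}, so the remaining work is organisational. The step I expect to require the most care is the correct orchestration of the Farey/Stern--Brocot induction: verifying that every reduced fraction in $[0,1]$ genuinely occurs as a mediant in this construction, so that the induction reaches all of $\mathcal F_N$, and checking that the symmetric ``betweenness'' conclusion of Proposition~\ref{prop:monotone} is combined with the ordering hypothesis $x_{p/q}<x_{p'/q'}$ in the right direction. The only genuine exception to the mediant machinery, the equal-denominator pair $\{0/1,1/1\}$, is handled directly via $x_{1/2}=\sqrt{ab}$.
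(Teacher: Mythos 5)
Your proposal is correct and follows essentially the same route as the paper: translate $\mathcal L(\alpha_{p/q})$ into $x_{p/q}$ via Proposition~\ref{prop:phi}, reduce to Farey neighbours, and run a Stern--Brocot/mediant induction starting from $x_{0/1}=a<x_{1/1}=b$, with Proposition~\ref{prop:monotone} supplying the betweenness of the mediant at each step. The one place you are more careful than the paper is the equal-denominator pair $\{0/1,1/1\}$, which is excluded by the hypothesis $q>q'$ of Proposition~\ref{prop:monotone} and which you dispose of directly via $x_{1/2}=\sqrt{ab}$ from Example~\ref{ex:prop:phi}.
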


\begin{proof}
By considering the Farey sequence,
it is sufficient to show the conclusion for rational numbers $p/q$ and $p'/q'$ 
with $\det \begin{pmatrix} p & p' \\ q & q' \end{pmatrix} = - 1$.   
By Proposition~\ref{prop:phi},
$$
\mathcal L ( \alpha_{p/q} )  < \mathcal L ( \alpha_{(p+p')/(q+q')} ) < \mathcal L ( \alpha_{p'/q'} ),
$$
is equivalent to 
$$
x_{p/q}  < x_{(p+p')/(q+q')} < x_{p'/q'},
$$
which is established in Proposition~\ref{prop:monotone}.
For a rational number $p/q$ in $[0,1]$ with $q > 1$, we can write 
$$ \frac pq = \frac{p'+p''}{q'+q''},
$$
where $p'/q'$, $p''/q''$ are two distinct neighbors of $p/q$ in $\mathcal F_q$. 
Thus by an inductive argument starting with $x_{0/1} = a < x_{1/1} = b$, we complete the proof.
\end{proof}

The above proposition shows that $f$ is monotone increasing on the rationals.
From now on, we will discuss the continuity of $f$.

\begin{lem}\label{lem:cont}
For an irrational slope $\theta$, we have
$$\cL(\alpha_\theta) = \lim_{k\to \infty} \cL(\alpha_{p_{k}/q_{k}}),$$
where $p_{k}/q_{k}$ is the $k$-th principal convergent of $\theta$.   
\end{lem}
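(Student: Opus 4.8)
The plan is to move everything to continuants and then carry out a single comparison, at the lengths $n=q_k$, between the purely periodic continued fraction $\alpha_{p_k/q_k}=[0;\overline{w_{p_k/q_k}}]$ and the genuine Sturmian continued fraction $\alpha_\theta=[0;\bs]$, where I write $\bs=\bs_{\theta,0}=s_1s_2\cdots$. By Theorem~\ref{thm:Sturm} we have $\cL(\alpha_\theta)=\lim_{n\to\infty}\frac1n\log K(s_1,\dots,s_n)$, so along the subsequence $n=q_k$,
$$
\cL(\alpha_\theta)=\lim_{k\to\infty}\frac1{q_k}\log K(s_1,\dots,s_{q_k}).
$$
On the rational side, $\alpha_{p_k/q_k}$ is quadratic with period $q_k$, and Lemma~\ref{lem:JaLi} gives
$$
\cL(\alpha_{p_k/q_k})=\frac1{q_k}\log\frac{T(w_{p_k/q_k})+\sqrt{T(w_{p_k/q_k})^2-(-1)^{q_k}4}}{2}.
$$
Since $T(w_{p_k/q_k})\to\infty$, the elementary expansion $\log\frac{t+\sqrt{t^2\mp4}}{2}=\log t+O(1/t)$ reduces the task to controlling $\frac1{q_k}\log T(w_{p_k/q_k})$.

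First I would trade the trace for a continuant. Reading off the continuant matrix associated with a word $w=b_1\cdots b_n$, its trace equals $K(b_1,\dots,b_n)+K(b_2,\dots,b_{n-1})$, so $K(w)\le T(w)<2K(w)$ and hence $\frac1{q_k}\log T(w_{p_k/q_k})=\frac1{q_k}\log K(w_{p_k/q_k})+O(1/q_k)$. Everything then comes down to the comparison
$$
\Big|\tfrac1{q_k}\log K(w_{p_k/q_k})-\tfrac1{q_k}\log K(s_1,\dots,s_{q_k})\Big|\longrightarrow 0,
$$
and this is exactly what Proposition~\ref{prop1} is built to deliver: two factors of $\bs$ of the same length $q_k$ (which lies in the admissible range $[q_k,q_{k+1}-1]$) have continuants whose ratio is at most $2^{2k}c$. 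Dividing the resulting bound $2k\log2+\log c$ by $q_k$ and invoking $q_k\ge 2^{k/2}$ (as in the proof of Theorem~\ref{thm:Sturm}) makes the difference tend to $0$. Assembling the three estimates yields
$$
\cL(\alpha_{p_k/q_k})=\frac1{q_k}\log K(s_1,\dots,s_{q_k})+o(1)\longrightarrow\cL(\alpha_\theta),
$$
which is the lemma.

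The one genuine obstacle is that Proposition~\ref{prop1} speaks of factors of $\bs$, whereas $w_{p_k/q_k}$ need not literally be such a factor. I would circumvent this by comparing $w_{p_k/q_k}$ with the standard word $M_k$ of \eqref{eq:M_n}: the lower Christoffel word and the standard word of slope $p_k/q_k$ share the common prefix obtained by deleting their last two letters and differ only there, so altering a bounded number of letters changes the continuant by a multiplicative factor depending only on $a$ and $b$; thus $K(w_{p_k/q_k})\asymp K(M_k)$ uniformly in $k$. Since $M_k$ is a prefix of the characteristic word $\bs_{\theta,\theta}$ and $F_n(\bs_{\theta,\theta})=F_n(\bs)$ for every $n$, the word $M_k$ is a bona fide factor of $\bs$, so Proposition~\ref{prop1} applies to $M_k$ and the extra bounded factor is harmlessly absorbed into the $o(1)$ term. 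A final, purely bookkeeping remark: the even- and odd-indexed convergents approach $\theta$ from opposite sides, and $f$ is increasing along one and decreasing along the other by Proposition~\ref{Prop:monotonicity}; since the estimate above treats the whole sequence $k$ at once, both one-sided limits automatically coincide with $\cL(\alpha_\theta)$, as needed in the proof of Theorem~\ref{thm2}.
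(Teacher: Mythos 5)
Your overall strategy is genuinely different from the paper's: the paper proves Lemma~\ref{lem:cont} by comparing Birkhoff sums through \eqref{Levylimit}, matching the periodic orbit of $\overline{s_1\cdots s_{q_k-1}b}$ against the tails $[s_i;s_{i+1},\ldots]$ and using the exponential convergence of continued fractions, whereas you convert everything to continuants via Lemma~\ref{lem:JaLi}, the bound $K(w)\le T(w)\le 2K(w)$, and Proposition~\ref{prop1}. That outline is viable, but the step you use to resolve your ``one genuine obstacle'' is false as stated. The lower Christoffel word $w_{p_k/q_k}$ and the standard word $M_k$ do \emph{not} agree except in their last two letters: the correct relation (the one the paper cites from Aigner) is $w_{p_k/q_k}=awb$ while $M_k=wab$ or $wba$ with $w$ a palindrome, so the two words are \emph{conjugates} (cyclic shifts) of one another, and cyclic shifts can disagree in a large proportion of their positions. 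Concretely, for a slope with convergent $p_2/q_2=2/5$ one has $w_{2/5}=aabab$ while $M_2=(ab)^2a=ababa$; these differ in four of their five letters. Hence the ``change a bounded number of letters'' principle does not apply, and your key estimate $K(w_{p_k/q_k})\asymp K(M_k)$ is unjustified as written.

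The gap is repairable, which is worth recording. (i) The conclusion $T(w_{p_k/q_k})=T(M_k)$ is in fact an exact identity: the trace is invariant under cyclic permutation ($\Tr(AB)=\Tr(BA)$) and also under word reversal (each matrix $\begin{pmatrix} a_i & 1 \\ 1 & 0\end{pmatrix}$ is symmetric, so reversing a word transposes its matrix), and these two invariances carry $awb$ to both $wba$ and $wab$ when $w$ is a palindrome; with this identity your chain of inequalities goes through verbatim. (ii) More efficiently, use the fact the paper itself extracts from the same citation: since $\bs_{\theta,0}=a\,\bs_{\theta,\theta}$ and $M_k$ is a prefix of $\bs_{\theta,\theta}$, one gets $w_{p_k/q_k}=s_1\cdots s_{q_k-1}b$, i.e.\ the Christoffel word differs from the honest prefix $s_1\cdots s_{q_k}$ of $\bs$ in at most its \emph{last} letter. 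Then \eqref{eq:split K} alone gives $K(w_{p_k/q_k})\asymp K(s_1,\dots,s_{q_k})$ with a constant depending only on $a$ and $b$, and your argument closes without needing Proposition~\ref{prop1} or the detour through $M_k$ at all. As written, however, the proof has a genuine hole at exactly the point where the rational-slope word is connected to the Sturmian word.
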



\begin{proof}    
By 
 \eqref{Levylimit}, we have 
\begin{equation*}
\cL(\alpha_\theta) = \lim_{n\to \infty}\frac{1}{n}\sum_{i=1}^n \log{([s_i;s_{i+1},\ldots])},
\end{equation*}
where $\bs_{\theta,0} = s_1 s_2 \ldots$.
It is known that $M_k = wab$ or $M_k=wba$ for some palindrome word $w$ and $w_{p_k/q_k} = awb$; see \cite[Theorem 8.40]{Ai15}.
Thus, for each $k \ge1$, we have that $w_{p_k/q_k} = s_1 \ldots s_{q_k -1} b$.    
The $q_{k}$-th letter $s_{q_k}$ of $\bs_{\theta,0}$ may not be $b$.
Thus, we have
\begin{align*}
& \cL(\alpha_{p_k/q_k}) 
 = \cL([0;\overline{s_1\ldots s_{q_k-1} b}]) \\
& = \frac{1}{q_k}\left[\log([\overline{b;s_1,s_2,\cdots,s_{q_k-1}}]) + \sum_{i = 1}^{q_k-1}\log{([\overline{s_i;s_{i+1},\ldots, s_{q_k-1}, b, s_{1}, \ldots, s_{i-1}}])}
\right].
\end{align*} 

For $r \ge 0$, let $\gamma = [c_{0}; c_{1}, c_{2}, \ldots]$, $\delta = [d_{0}; d_{1},d_{2},\ldots]$ and $R_{r}/S_{r} = [b_0;b_1,\cdots, b_{r}]$ be 
continued fractions with partial quotients in $\{a, b\}$. 
By using the fact that $S_{r}\ge 2^{\frac{r-1}{2}}$, we have
\begin{equation*}\begin{split}
& |[b_0;b_1,\ldots,b_{r},c_{0},c_{1},\ldots] - [b_0;b_1,\ldots,b_{r},d_{0},d_{1},\ldots] | \\
& = \left| \frac{R_{r}\gamma + R_{r-1}}{S_{r}\gamma + S_{r-1}} - \frac{R_{r}\delta + R_{r-1}}{S_{r}\delta + S_{r-1}} \right|  
=  \frac{|\gamma - \delta|}{(S_{r}\gamma + S_{r-1})(S_{r}\delta + S_{r-1})}   \\ 
& \le \frac{ [b ;\overline{a, b}] - [a ;\overline{b, a}] }{2^{r-1}}.  
\end{split}\end{equation*}

Since $|\log x - \log y | < |x-y|$ if $x,y>1$, we get
\begin{equation*}\begin{split}
&\left|\cL(\alpha_\theta)-\cL(\alpha_{p_k/q_k})\right| \\
&\le \frac{1}{q_k} \Bigg[ \Big| \log([\overline{s_{q_k};s_1,s_2,\cdots,s_{q_k-1}}]) - \log([\overline{b;s_1,s_2,\cdots,s_{q_k-1}}])\Big| \\
&\qquad  + \sum_{i=1}^{q_k-1} \Big| \log{([s_i;s_{i+1},\ldots])} - \log{([\overline{s_i;s_{i+1},\ldots, s_{q_k-1}, b, s_{1}, \ldots, s_{i-1}}])} \Big| \Bigg] \\
& \le \frac{[b;\overline{a, b}]-[a;\overline{b, a}]}{q_k} \left( 1 + \sum_{r=0}^{q_k-2}\frac{1}{2^{r-1}}  \right)
< \frac{ 5 ([b;\overline{a, b}]-[a;\overline{b, a}] ) }{q_k} 
\to 0 \ \as \ k\to \infty.
\end{split}\end{equation*}
We conclude that $\cL(\alpha_\theta) = \lim\limits_{k \to \infty}\cL(\alpha_{p_k/q_k})$.     
\end{proof}



In the following lemma, we show that $f$ has no jump discontinuity at  a rational number $p/q$ in $[0, 1]$.

\begin{lem}\label{lem:converge}
For a given rational number $p/q$ in $[0,1)$, there exist a decreasing sequence $(r_n)_{n \ge 1}$ and an increasing sequence $(r_n')_{n\ge1}$  of rational numbers which converge to $p/q$ such that $x_{r_n}$ and $x_{r_n'}$ converge to $x_{p/q}$.
\end{lem}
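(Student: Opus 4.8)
The plan is to realize the two approximating sequences as iterated mediants, so that the relevant Christoffel words become powers of a fixed word, and then to control the traces $T(w_{r_n})$ through the eigenvalues of a single matrix. Fix $p/q$ with $0<p/q<1$ (the boundary point $p/q=0$ is one‑sided and handled identically from its single neighbor $1/1$). Let $p_1/q_1<p/q<p_2/q_2$ be the two Farey neighbors of $p/q$ in $\mathcal F_q$, so that $p/q=(p_1+p_2)/(q_1+q_2)$, $q=q_1+q_2$, $pq_1-qp_1=1$ and $p_2q-pq_2=1$. I would set
$$
r_n'=\frac{np+p_1}{nq+q_1},\qquad r_n=\frac{np+p_2}{nq+q_2},
$$
which are increasing to $p/q$ and decreasing to $p/q$, respectively. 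Since $r_n'$ is the mediant of $r_{n-1}'$ and $p/q$ (with $r_{n-1}'<p/q$), and $r_n$ is the mediant of $p/q$ and $r_{n-1}$ (with $p/q<r_{n-1}$), iterating the standard factorization \eqref{eq:standard factorization} gives
$$
w_{r_n'}=w_{p_1/q_1}\,(w_{p/q})^{\,n},\qquad w_{r_n}=(w_{p/q})^{\,n}\,w_{p_2/q_2}.
$$

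Next I would pass to matrices. Let $P$, $P_1$, $P_2$ be the products of matrices $\begin{pmatrix} a_i & 1 \\ 1 & 0\end{pmatrix}$ associated with $w_{p/q}$, $w_{p_1/q_1}$, $w_{p_2/q_2}$. The defining relation $\bfT_q(x_{p/q})=T(w_{p/q})=\Tr(P)$ of \eqref{eq:polyT}, together with $\det P=(-1)^q=\det\bigl(X_{p/q}^{\,q}\bigr)$, shows that $P$ and $X_{p/q}^{\,q}$ have the same characteristic polynomial. Writing $\beta=\tfrac{x_{p/q}+\sqrt{x_{p/q}^2+4}}{2}>1$ (so that $\cL(\alpha_{p/q})=\log\beta$ by Proposition~\ref{prop:phi}), the eigenvalues of $X_{p/q}$ are $\beta$ and $-1/\beta$, hence those of $P$ are $\beta^{q}$ and $(-1)^{q}\beta^{-q}$. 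Diagonalizing $P$ and using $T(w_{r_n'})=\Tr(P_1P^{n})$ yields the exact expansion
$$
T(w_{r_n'})=c_1\,\beta^{\,nq}+c_2\,(-1)^{\,nq}\beta^{\,-nq},
$$
for constants $c_1,c_2$, and an analogous expansion for $T(w_{r_n})=\Tr(P_2P^{n})$.

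The crucial point, and the step I expect to be the main obstacle, is to show $c_1>0$, which is what gives the sharp growth rate $\log T(w_{r_n'})=nq\log\beta+O(1)$. I would obtain this from the Perron–Frobenius theorem: $P$ is a primitive nonnegative matrix (the diagonal entry coming from $a_i\ge 1$ guarantees aperiodicity), so its dominant eigenvalue $\beta^{q}$ is simple with strictly positive right and left eigenvectors $v,u$ normalized by $u^{T}v=1$; then $P^{n}=\beta^{nq}vu^{T}+O(\beta^{-nq})$, whence $c_1=u^{T}P_1v>0$ because $P_1$ is a nonzero nonnegative matrix. (Alternatively, the two‑sided bound $T(w_{r_n'})\asymp\beta^{nq}$ can be read off from the exact identity $\Tr(P^{n})=\beta^{nq}+(-1)^{nq}\beta^{-nq}$ combined with the continuant inequalities \eqref{eq:split K}, but the positivity of the leading coefficient is cleanest via Perron–Frobenius.)

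Finally I would recover convergence of the $x$‑values. By definition $x_{r_n'}$ satisfies $\bfT_{q_n}(x_{r_n'})=T(w_{r_n'})$ with $q_n=nq+q_1$. Setting $\beta_n=\tfrac{x_{r_n'}+\sqrt{x_{r_n'}^2+4}}{2}$ and noting that the second root has modulus $\beta_n^{-1}<1$, this reads $\beta_n^{\,q_n}=T(w_{r_n'})-(-1/\beta_n)^{q_n}$ with the last term bounded by $1$; since $T(w_{r_n'})\to\infty$, taking logarithms gives
$$
\log\beta_n=\frac{1}{q_n}\log T(w_{r_n'})+o\!\left(\frac{1}{q_n}\right)=\frac{nq\log\beta+O(1)}{nq+q_1}\longrightarrow\log\beta .
$$
Because $y\mapsto\tfrac{y+\sqrt{y^2+4}}{2}$ is a continuous increasing bijection, $\beta_n\to\beta$ forces $x_{r_n'}\to x_{p/q}$, and the identical computation applied to $r_n$ (with $P_2$ in place of $P_1$) gives $x_{r_n}\to x_{p/q}$. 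This establishes the lemma, and through $\cL(\alpha_\theta)=\log\tfrac{x_\theta+\sqrt{x_\theta^2+4}}{2}$ it shows that $f$ has no jump discontinuity at $p/q$.
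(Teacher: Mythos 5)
Your proof is correct, and its skeleton is the same as the paper's: both arguments realize the approximating rationals as iterated mediants $r_n=(np+p')/(nq+q')$ of $p/q$ with a fixed Farey neighbour, both derive an exact two-term exponential formula for the traces $T(w_{r_n})$ whose dominant root is $\vphi(x_{p/q})^q$, and both convert that growth rate back into convergence of $x_{r_n}$ via monotonicity of $x\mapsto \bfT_m(x)$. The two key technical steps are carried out differently, though. To get the expansion, the paper applies Lemma~\ref{lem:second} repeatedly to obtain the three-term recurrence $T(w_{r_n})=T(w_{p/q})T(w_{r_{n-1}})+(-1)^{q+1}T(w_{r_{n-2}})$ and solves it, whereas you exploit the explicit word factorization $w_{r_n'}=w_{p_1/q_1}(w_{p/q})^n$ and diagonalize the word matrix $P$ (after observing that $P$ and $X_{p/q}^q$ share trace and determinant, hence characteristic polynomial); the two computations yield the same roots $u=\vphi(x_{p/q})^q$ and $v=(-1)^q\vphi(x_{p/q})^{-q}$. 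More substantively, the control of the leading coefficient differs: the paper proves $C_1,C_2\neq 0$ by an arithmetic argument (the discriminant $T(w_{p/q})^2+(-1)^{q+1}4$ is not a perfect square, so the conjugation swapping the roots swaps $C_1$ and $C_2$), positivity of $C_1$ then being implicit from positivity of the traces, while you obtain strict positivity of $c_1$ at once from Perron--Frobenius applied to the primitive nonnegative matrix $P$. Your route avoids the number-theoretic digression and is robust (it would survive, e.g., replacing traces by other positive linear functionals of $P^n$); the paper's route is more elementary in that it needs no Perron--Frobenius machinery. Finally, your endgame---taking logarithms in $\bfT_{q_n}(x_{r_n'})=T(w_{r_n'})$ and using that $\vphi$ is an increasing homeomorphism---is an equivalent substitute for the paper's sandwich $\bfT_{q'+nq}(x_{p/q}-\veps)<\bfT_{q'+nq}(x_{r_n})<\bfT_{q'+nq}(x_{p/q}+\veps)$. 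A minor point in your favour: you make explicit the one-sidedness of the statement at $p/q=0$, which the paper leaves implicit.
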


\begin{proof} 
Let $p/q$ be a rational number in  $[0,1]$. 
Choose a rational number $p'/q'$ such that $| p'q-pq' | = 1$.
For $n \ge 1$, setting $$r_n:=\frac{p'+np}{q'+nq},$$ 
we observe that $r_n$ is decreasing to $p/q$ or increasing to $p/q$ as $n$ tends to infinity,
according as $p'q-pq' = 1$ or $-1$.  
Note that $r_n\not=p/q$, for $n \ge 1$.

For brevity, we define a function $\varphi$ by setting
\begin{equation}\label{eq:phi}
\varphi (x) = \frac{x + \sqrt{x^2+4} }{2}, 
\qquad x \in \bR.
\end{equation}
By \eqref{eq:Tn(x)}, 
we have
\begin{equation}\label{eq:t_m1}
\bfT_m(x)= \varphi(x) ^m + \left(- \frac{1}{\varphi(x)} \right)^{m}, \qquad \text{for $m \ge 1$},
\end{equation}
and $\mathcal L ( \alpha_{p/q} ) = \log \varphi (x_{p/q} ).$
By Lemma~\ref{lem:second}, we get 
\begin{equation*}
T (w_{r_n})= T(w_{p/q}) T (w_{r_{n-1}})+(-1)^{q+1} T (w_{r_{n-2}}), \quad \hbox{for $n \ge 2$}.
\end{equation*}
Thus, the sequence $(T (w_{r_n}))_{n \ge 1}$ is a recurrence sequence and 
there exist constants $C_1,C_2$ such that
\begin{equation}\label{eq:t_m2}
T(w_{r_n})=C_1{u}^n+C_2{v}^n, \quad n \ge 1,
\end{equation}
where 
$$u=\frac{T(w_{p/q})+\sqrt{T(w_{p/q})^2+(-1)^{q+1}4}}{2}
~\andd~
v=\frac{T(w_{p/q})-\sqrt{T(w_{p/q})^2+(-1)^{q+1}4}}{2}.$$
Since the integer $T(w_{p/q})^2+(-1)^{q+1}4$ cannot be a positive perfect square, we 
deduce that $C_1$ and $C_2$ are nonzero. 
Since $$\frac 1q \log u = \cL(\alpha_{p/q}) =\log \vphi(x_{p/q}), $$
we have, for any $\veps >0$, 
$$
0 < \vphi(x_{p/q}-\veps)^q < u < \vphi(x_{p/q}+\veps)^q,
$$
if $q$ is large enough.
By \eqref{eq:t_m1}, \eqref{eq:t_m2} and \eqref{eq:polyT}, we get
\begin{equation*}\begin{split}
\cfrac{\bfT_{q'+nq}(x_{p/q} \pm \veps)}{\bfT_{q'+nq}(x_{r_n})} 
&= \frac{\vphi(x_{p/q} \pm \veps)^{q'+nq} + (-\vphi(x_{p/q} \pm \veps))^{-q'-nq}}{C_1u^n+C_2v^n} \\ 
&= \frac{\vphi(x_{p/q} \pm \veps)^{q'} \left(\frac{\vphi(x_{p/q} \pm \veps)^{q}}{u}\right)^n +  u^{-n} (-\vphi(x_{p/q} \pm \veps))^{-q'-nq} }{C_1 +C_2 \left(\frac{v}{u}\right)^n} .
\end{split}\end{equation*}
Since $\vphi (x) > 1$ for $x  > 0$ and $|v| < 1 < u$, 
we get, for small $\varepsilon >0$, that
\begin{equation*}\begin{split}
\cfrac{\bfP_{q'+nq}(x_{p/q}-\veps)}{\bfP_{q'+nq}(x_{r_n})} \to 0 ~\andd~
\cfrac{\bfP_{q'+nq}(x_{p/q}+\veps)}{\bfP_{q'+nq}(x_{r_n})} \to \infty,
\end{split}\end{equation*}
as $n\to \infty$.
There exists $N$ depending only on $\veps$ such that, for any $n>N$, 
$$\bfP_{q'+nq}(x_{p/q}-\veps) < \bfP_{q'+nq}(x_{r_n}) < \bfP_{q'+nq}(x_{p/q}+\veps).$$
Since $\bfP_{q'+nq}$ is monotone increasing, we obtain that 
$$x_{p/q}-\veps < x_{r_n} < x_{p/q}+\veps.$$
Since $\veps$ can be taken arbitrarily small, this shows that
$x_{r_n}$ tends to $x_{p/q}$ as $n$ tends to infinity. 
\end{proof}

\section*{Acknowledgement}
The authors wish to warmly thank the anonymous referee for many very helpful suggestions that 
help them to improve the readability of the paper. 

The research was supported by the French--Korean bilateral STAR project (NRF-2017K1A3A1A21013650), the NRF of Korea (NRF-2018R1A2B6001624), and the French--Korean LIA in Mathematics.
S. L. acknowledges the support of the Centro di Ricerca Matematica Ennio de Giorgi and of UniCredit Bank R\&D group for financial support through the Dynamics and Information Theory Institute at the Scuola Normale Superiore.

\end{document}